\newtheorem{thm}{Theorem}[section]
\newtheorem{cor}[thm]{Corollary}
\newtheorem{ques}[thm]{Question}
\numberwithin{equation}{section}
\begin{document}
\title{\bf Cofiniteness and finiteness of associated prime ideals of generalized local cohomology modules}
\author{Alireza Vahidi, Ahmad Khaksari, and Mohammad Shirazipour}

\date{}
\maketitle

\renewcommand{\thefootnote}{}
\footnote{{\bf 2020 Mathematics Subject Classification.} 13D07, 13D45.}
\footnote{{\bf Key words and phrases.} Associated prime ideals, cofinite modules, generalized local cohomology modules.}
\footnote{This research of Alireza Vahidi was in part supported by a grant from Payame Noor University.}
\renewcommand{\thefootnote}{\arabic{footnote}}
\setcounter{footnote}{0}
\begin{abstract}
Let $n$ be a non-negative integer, $R$ a commutative Noetherian ring, $\mathfrak{a}$ an ideal of $R$, $M$ and $N$ two finitely generated $R$-modules, and $X$ an arbitrary $R$-module. In this paper, we study cofiniteness and finiteness of associated prime ideals of generalized local cohomology modules. In some cases, we show that $\operatorname{H}^{i}_{\mathfrak{a}}(M,X)$ is an $(\operatorname{FD}_{<n},\mathfrak{a})$-cofinite $R$-module and $\{\mathfrak{p}\in\operatorname{Ass}_R(\operatorname{H}^{i}_{\mathfrak{a}}(M,X)):\dim(R/\mathfrak{p})\geq{n}\}$ is a finite set for all $i$. If $R$ is semi-local, we observe that $\operatorname{Ass}_R(\operatorname{H}^{i}_{\mathfrak{a}}(M,N))$ is finite for all $i$ when $\dim_R(M)\leq{3}$ or $\dim_R(N)\leq{3}$. Also, in some situations, we prove that $\operatorname{H}^{i}_{\mathfrak{a}}(M,X)$ is an $\mathfrak{a}$-cofinite $R$-module for all $i$.
\end{abstract}
\section{Introduction}\label{1}
Throughout, let $R$ denote a commutative Noetherian ring with non-zero identity, $\mathfrak{a}$ an ideal of $R$, $M$ and $N$ two finite (i.e., finitely generated) $R$-modules, $X$ and $Y$ two arbitrary $R$-modules which are not necessarily finite, and $n$ and $t$ two non-negative integers. For basic results, notations, and terminology not given in this paper, readers are referred to \cite{BSh, BH, Rot}.

It is well known that the local cohomology module $\operatorname{H}^{i}_{\mathfrak{m}}(N)$ is an Artinian $R$-module and so $\operatorname{Hom}_R(R/\mathfrak{m}, \operatorname{H}^i_\mathfrak{m}(N))$ is a finite $R$-module for all $i$ when $R$ is local with maximal ideal $\mathfrak{m}$. This was lead to a conjecture from Grothendieck that for any ideal $\mathfrak{a}$ of a Noetherian ring $R$ and any finite $R$-module $N$, the module $\operatorname{Hom}_R(R/\mathfrak{a}, \operatorname{H}^i_\mathfrak{a}(N))$ is finite \cite[Expose XIII, Conjecture 1.1]{G}. This conjecture is not true in general as shown by Hartshorne in \cite[Section 3]{Ha2}. However, he defined an $\mathfrak{a}$-torsion $R$-module $X$ to be \textit{$\mathfrak{a}$-cofinite} if $\operatorname{Ext}^{i}_{R}(R/\mathfrak{a}, X)$ is a finite $R$-module for all $i$ and asked, in \cite[First Question]{Ha2}, the following question:

\begin{ques}\label{1-1}
Is $\operatorname{H}^{i}_{\mathfrak{a}}(N)$ an $\mathfrak{a}$-cofinite $R$-module for all $i$?
\end{ques}

In \cite[Problem 4]{Hu}, Huneke raised the following question:

\begin{ques}\label{1-2}
Is $\operatorname{Ass}_R(\operatorname{H}^{i}_{\mathfrak{a}}(N))$ a finite set for all $i$?
\end{ques}

This question is related to Question \ref{1-1} because $\operatorname{Ass}_R(X)$ is a finite set for an $\mathfrak{a}$-cofinite $R$-module $X$. Singh, in \cite[Section 4]{Si}, has given a counterexample to these questions. However, Questions \ref{1-1} and \ref{1-2} have been studied by many authors and they were shown that these questions are true in some situations. In \cite[Theorem 7.10]{Mel1} and \cite[Theorem 2.10]{Mel}, Melkersson proved that $\operatorname{H}^{i}_{\mathfrak{a}}(N)$ is an $\mathfrak{a}$-cofinite $R$-module for all $i$ if $\dim(R)\leq 2$ or $\mathfrak{a}$ is an ideal of $R$ with $\dim(R/\mathfrak{a})\leq 1$. As an improvement of \cite[Theorem 7.10]{Mel1}, by \cite[Corollary 5.2]{CGH}, the answer to the above questions is yes whenever $\dim_R(N)\leq 2$.

Recall that a class of $R$-modules is a \textit{Serre subcategory} of the category of $R$-modules when it is closed under taking submodules, quotients, and extensions. Let $\mathcal{S}'$ and $\mathcal{S}''$ be two Serre subcategories of the category of $R$-modules. Yoshizawa, in \cite[Definition 1.1]{Yo}, defined \textit{the class of extension modules of $\mathcal{S}'$ by $\mathcal{S}''$} as the class of all $R$-modules $X$ with some $R$-modules $X'\in \mathcal{S}'$ and $X''\in \mathcal{S}'$ such that a sequence
\[0\longrightarrow X'\longrightarrow X\longrightarrow X''\longrightarrow 0\]
is exact. We will denote the class of extension modules of the class of all finite $R$-modules by the class of all $R$-modules $X$ with $\dim_R(X)< n$ by $\operatorname{FD}_{< n}$ (i.e., $X$ is an $\operatorname{FD}_{< n}$ $R$-module if there exists a finite $R$-submodule $X'$ of $X$ such that $\dim_R(X/X')< n$) (see \cite[Definition 2.1]{AB} and \cite[Definition 2.1]{AN}). From \cite[Theorem 2.3]{Yo}, $\operatorname{FD}_{< n}$ is a Serre subcategory of the category of $R$-modules. We say that $X$ is an \textit{$(\operatorname{FD}_{< n}, \mathfrak{a})$-cofinite $R$-module} if $X$ is an $\mathfrak{a}$-torsion $R$-module and $\operatorname{Ext}^{i}_{R}(R/\mathfrak{a}, X)$ is an $\operatorname{FD}_{< n}$ $R$-module for all $i$ \cite[Definition 4.1]{ATV}. Note that $\operatorname{FD}_{< 0}$ (resp. the class of all $(\operatorname{FD}_{< 0}, \mathfrak{a})$-cofinite $R$-modules) is the same as the class of all finite (resp. $\mathfrak{a}$-cofinite) $R$-modules. Therefore, as generalizations of Questions \ref{1-1} and \ref{1-2}, we have the following questions (see \cite[Question]{AbB}, \cite[Questions 1.6 and 1.8]{VM}, \cite[Questions 1.5 and 1.6]{VP}, and \cite[Questions 1.3 and 1.4]{VKhSh1}). In this paper, for a subset $A$ of $\operatorname{Spec}(R)$, the set $\{\mathfrak{p}\in A : \dim(R/\mathfrak{p})\geq n\}$ (resp. $\{\mathfrak{p}\in A : \dim(R/\mathfrak{p})= n\}$) is denoted by $A_{\geq n}$ (resp. $A_{= n}$).

\begin{ques}\label{1-3}
Is $\operatorname{H}^{i}_{\mathfrak{a}}(N)$ an $(\operatorname{FD}_{<  n}, \mathfrak{a})$-cofinite $R$-module for all $i$?
\end{ques}

\begin{ques}\label{1-4}
Is $\operatorname{Ass}_R(\operatorname{H}^{i}_{\mathfrak{a}}(N))_{\geq n}$ a finite set for all $i$?
\end{ques}

The first author and Morsali, in \cite[Corollary 4.5]{VM}, showed that $\operatorname{H}^{i}_{\mathfrak{a}}(N)$ is an $(\operatorname{FD}_{<  n}, \mathfrak{a})$-cofinite $R$-module and so $\operatorname{Ass}_R(\operatorname{H}^{i}_{\mathfrak{a}}(N))_{\geq n}$ is a finite set for all $i$ when $\dim(R/\mathfrak{a})\leq n+ 1$ which is a generalization of Melkersson's result \cite[Theorem 2.10]{Mel} (see also \cite[Theorems 2.5 and 2.10]{AbB} for the case that $R$ is a complete local ring). Also, the first author and Papari-Zarei, in \cite[Corollary 3.2]{VP}, proved that the answer to Questions \ref{1-3} and \ref{1-4} is yes if $\dim(R)\leq n+ 2$ which is a generalization of Melkersson's result \cite[Theorem 7.10]{Mel1}. As a generalization of \cite[Corollary 5.2]{CGH} and an improvement of \cite[Corollary 3.2]{VP}, the authors in \cite[Theorem 2.1 and Corollary 2.2]{VKhSh1} showed that the answer to Questions \ref{1-3} and \ref{1-4} is yes whenever $\dim_R(N)\leq n+ 2$ which also prepares an affirmative answer to Question \ref{1-2} for the case that $R$ is a semi-local ring and $\dim_R(N)\leq 3$ (see \cite[Corollary 2.4]{VKhSh1}).

Herzog introduced the \textit{$i$th generalized local cohomology module}
\[\operatorname{H}^i_\mathfrak{a}(Y, X)\cong \underset{j\in \mathbb{N}}\varinjlim \operatorname{Ext}^{i}_{R}(Y/{\mathfrak{a}}^{j}Y, X)\]
of $Y$ and $X$ with respect to $\mathfrak{a}$ in \cite{He}. Note that $\operatorname{H}^i_\mathfrak{a}(R, X)$ is just the ordinary local cohomology module $\operatorname{H}^i_\mathfrak{a}(X)$ of $X$ with respect to $\mathfrak{a}$. As generalizations of Questions \ref{1-1} and \ref{1-2}, we have the following questions for the theory of generalized local cohomology (see \cite[Question 2.7]{Ya}).

\begin{ques}\label{1-5}
Is $\operatorname{H}^{i}_{\mathfrak{a}}(M, N)$ an $\mathfrak{a}$-cofinite $R$-module for all $i$?
\end{ques}

\begin{ques}\label{1-6}
Is $\operatorname{Ass}_R(\operatorname{H}^{i}_{\mathfrak{a}}(M, N))$ a finite set for all $i$?
\end{ques}

The above questions have been studied by several authors and they were shown that these questions are true in some situations. Hassanzadeh and the first author, in \cite[Corollary 3.8]{HV}, generalized Melkersson's result \cite[Theorem 7.10]{Mel1} and showed that $\operatorname{H}^{i}_{\mathfrak{a}}(M, N)$ is an $\mathfrak{a}$-cofinite $R$-module for all $i$ if $\operatorname{pd}_R(M)< \infty$ ($\operatorname{pd}_R(M)$ is denoted as the projective dimension of $M$) and $\dim(R)\leq 2$. As an improvement of \cite[Corollary 3.8]{HV}, the answer to Questions \ref{1-5} and \ref{1-6} is yes whenever $\dim_R(M)\leq 2$ or $\dim_R(N)\leq 2$ by \cite[Theorem 1.3]{CGH}.

Now, as generalizations of Questions \ref{1-3} and \ref{1-4} and also Questions \ref{1-5} and \ref{1-6}, it is natural to raise the following questions (see \cite[Questions 1.10 and 1.12]{VP2} and \cite[Questions 1.10 and 1.12]{VM2}).

\begin{ques}\label{1-7}
Is $\operatorname{H}^{i}_{\mathfrak{a}}(M, N)$ an $(\operatorname{FD}_{<  n}, \mathfrak{a})$-cofinite $R$-module for all $i$?
\end{ques}

\begin{ques}\label{1-8}
Is $\operatorname{Ass}_R(\operatorname{H}^{i}_{\mathfrak{a}}(M, N))_{\geq n}$ a finite set for all $i$?
\end{ques}

The first author and Morsali, in \cite[Corollary 3.5]{VM2}, showed that $\operatorname{H}^{i}_{\mathfrak{a}}(M, N)$ is an $(\operatorname{FD}_{<  n}, \mathfrak{a})$-cofinite $R$-module and so $\operatorname{Ass}_R(\operatorname{H}^{i}_{\mathfrak{a}}(M, N))_{\geq n}$ is a finite set for all $i$ if $\dim(R/\mathfrak{a})\leq n+ 1$ which is a generalization of \cite[Corollary 4.5]{VM}. Also, the first author and Papari-Zarei, in \cite[Corollary 3.6]{VP2}, proved that the answer to Questions \ref{1-7} and \ref{1-8} is yes when $\dim(R)\leq n+ 2$ which improves and generalizes \cite[Corollary 3.2]{VP} and \cite[Corollary 3.8]{HV}.

In this paper, we continue studying cofiniteness and finiteness of associated prime ideals of generalized local cohomology modules and, in some situations, preparing affirmative answers to the above questions. As generalizations of \cite[Theorem 2.1 and Corollary 2.2]{VKhSh1} and \cite[Theorem 1.3]{CGH}, we show that $\operatorname{H}^{i}_{\mathfrak{a}}(M, N)$ is an $(\operatorname{FD}_{<  n}, \mathfrak{a})$-cofinite $R$-module and $\operatorname{Ass}_R(\operatorname{H}^{i}_{\mathfrak{a}}(M, N))_{\geq n}$ is a finite set for all $i$ if $\dim_R(M)\leq n+ 2$ or $\dim_R(N)\leq n+ 2$. As a consequence and as a generalization of \cite[Corollary 2.4]{VKhSh1}, $\operatorname{Ass}_R(\operatorname{H}^{i}_{\mathfrak{a}}(M, N))$ is a finite set for all $i$ when $R$ is a semi-local ring and $\dim_R(M)\leq 3$ or $\dim_R(N)\leq 3$. We prove that $\operatorname{H}^{i}_{\mathfrak{a}}(M, X)$ is an $(\operatorname{FD}_{< n}, \mathfrak{a})$-cofinite $R$-module, $\operatorname{Ass}_R(\operatorname{H}^{i}_{\mathfrak{a}}(M, X))_{\geq n}$ is a finite set, and $\operatorname{Ext}^{i}_{R}(M/\mathfrak{a}M, X)$ is an $\operatorname{FD}_{< n}$ $R$-module for all $i$ whenever $\operatorname{pd}_R(M)< \infty$, $\operatorname{H}^{i}_{\mathfrak{a}}(M, X)$ is an $\operatorname{FD}_{< n+ 2}$ $R$-module for all $i< \operatorname{pd}_R(M)+ \dim_R(X)- n$ $($resp. $i< \dim(R)- n)$, and $\operatorname{Ext}^{i}_{R}(M/\mathfrak{a}M, X)$ is an $\operatorname{FD}_{< n}$ $R$-module for all $i\leq \operatorname{pd}_R(M)+ \dim_R(X)- n$ $($resp. $i\leq \dim(R)- n)$. We show that if $\dim(R/\mathfrak{a})\leq 2$, $\operatorname{Ext}^{i}_{R}(M/\mathfrak{a}M, X)$ is a finite $R$-module for all $i\leq t+ 1$, $\operatorname{Supp}_R(M)\cap \operatorname{Supp}_R(X)\cap \operatorname{Var}(\mathfrak{a})\cap\operatorname{Max}(R)$ is a finite set, and $\operatorname{Hom}_{R}(R/\mathfrak{a}, \operatorname{H}^j_{\mathfrak{a}}(M, X))$ is a finite $R$-module for all $j\leq t$, then $\operatorname{H}^j_{\mathfrak{a}}(M, X)$ is an $\mathfrak{a}$-cofinite $R$-module for all $j< t$. We also prove that $\operatorname{H}^j_{\mathfrak{a}}(M, X)$ is an $\mathfrak{a}$-cofinite $R$-module for all $j$ when $\dim(R/\mathfrak{a})\leq 2$, $\operatorname{Ext}^{i}_{R}(M/\mathfrak{a}M, X)$ is a finite $R$-module for all $i$, $\operatorname{Supp}_R(M)\cap \operatorname{Supp}_R(X)\cap \operatorname{Var}(\mathfrak{a})\cap\operatorname{Max}(R)$ is a finite set, and $\operatorname{H}^{2j}_{\mathfrak{a}}(M, X)$ (resp. $\operatorname{H}^{2j+ 1}_{\mathfrak{a}}(M, X)$) is an $\mathfrak{a}$-cofinite $R$-module for all $j$.
\section{Main results}\label{2}



In the first result, we prepare affirmative answers to Questions \ref{1-7} and \ref{1-8} for the case that $\dim_R(M)\leq n+ 2$ or $\dim_R(N)\leq n+ 2$ which is generalizations of \cite[Theorem 2.1 and Corollary 2.2]{VKhSh1} and \cite[Theorem 1.3]{CGH} (see also \cite[Theorem 7.10]{Mel1}, \cite[Corollary 3.2]{VP}, \cite[Corollary 3.8]{HV}, and \cite[Corollary 3.6]{VP2}).

\begin{thm}\label{2-1}
Let $M$ and $N$ be two finite $R$-modules such that $\dim_R(M)\leq n+ 2$ or $\dim_R(N)\leq n+ 2$. Then the following statements hold true:
\begin{itemize}
\item[\emph{(i)}] $\operatorname{H}^{i}_{\mathfrak{a}}(M, N)$ is an $(\operatorname{FD}_{< n}, \mathfrak{a})$-cofinite $\operatorname{FD}_{< n+ 2}$ $R$-module for all $i$;
\item[\emph{(ii)}] $\operatorname{Ass}_R(\operatorname{H}^{i}_{\mathfrak{a}}(M, N))_{\geq n}$ is a finite set for all $i$.
\end{itemize}
\end{thm}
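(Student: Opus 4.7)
The plan is to exploit Herzog's change-of-functor spectral sequence
\[E_2^{p,q}=\operatorname{H}^{p}_{\mathfrak{a}}(\operatorname{Ext}^{q}_{R}(M,N))\Longrightarrow \operatorname{H}^{p+q}_{\mathfrak{a}}(M,N)\]
in order to reduce the assertion to the ordinary local cohomology case already settled in \cite[Theorem 2.1 and Corollary 2.2]{VKhSh1}. My first step is to observe that, regardless of which of the two dimension hypotheses is in force, the inclusion $\operatorname{Supp}_R(\operatorname{Ext}^{q}_{R}(M,N))\subseteq \operatorname{Supp}_R(M)\cap\operatorname{Supp}_R(N)$ forces $\dim_R(\operatorname{Ext}^{q}_{R}(M,N))\leq n+2$ for every $q$, while each $\operatorname{Ext}^{q}_{R}(M,N)$ is a finite $R$-module because $M$ and $N$ are. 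Consequently, \cite[Theorem 2.1 and Corollary 2.2]{VKhSh1} applies to every term $\operatorname{Ext}^{q}_{R}(M,N)$ and yields that each $E_2^{p,q}$ is an $(\operatorname{FD}_{<n},\mathfrak{a})$-cofinite $\operatorname{FD}_{<n+2}$ $R$-module.

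Next I would transfer these two properties to the abutment. The class $\operatorname{FD}_{<n+2}$ is Serre by \cite[Theorem 2.3]{Yo}, and the class of $(\operatorname{FD}_{<n},\mathfrak{a})$-cofinite $R$-modules is likewise closed under subquotients and finite extensions inside the category of $\mathfrak{a}$-torsion $R$-modules, thanks to the long exact $\operatorname{Ext}$-sequence combined with the Serre property of $\operatorname{FD}_{<n}$. Therefore each $E_{\infty}^{p,q}$, being a subquotient of $E_2^{p,q}$, lies in both classes; and since $\operatorname{H}^{p+q}_{\mathfrak{a}}(M,N)$ admits a finite filtration whose successive quotients are precisely the $E_{\infty}^{p,q}$, iterated closure under extensions delivers (i).

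Part (ii) will then follow from (i) directly: each $\operatorname{H}^{i}_{\mathfrak{a}}(M,N)$ is $\mathfrak{a}$-torsion, so $\operatorname{Ass}_R(\operatorname{H}^{i}_{\mathfrak{a}}(M,N))=\operatorname{Ass}_R(\operatorname{Hom}_R(R/\mathfrak{a},\operatorname{H}^{i}_{\mathfrak{a}}(M,N)))$, and the latter module is $\operatorname{FD}_{<n}$ by (i); any $\operatorname{FD}_{<n}$ module has only finitely many associated primes $\mathfrak{p}$ with $\dim(R/\mathfrak{p})\geq n$, because the witnessing finite submodule controls exactly those primes while the quotient contributes only primes of dimension $<n$. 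The step I expect to demand the most care is the verification that $(\operatorname{FD}_{<n},\mathfrak{a})$-cofiniteness really is preserved under the subquotient-and-extension operations produced by the spectral sequence; once that Serre-type stability is pinned down, the rest is essentially bookkeeping around Herzog's sequence.
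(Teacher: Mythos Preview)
Your spectral-sequence route is a genuinely different and, for the $\operatorname{FD}_{<n+2}$ claim, perfectly valid alternative to the paper's argument. The paper does not pass through ordinary local cohomology at all: it sets $\overline{M}=M/\Gamma_{\mathfrak{a}}(M)$ (resp.\ $\overline{N}=N/\Gamma_{\mathfrak{a}}(N)$), chooses a non-zerodivisor $a\in\mathfrak{a}$ on $\overline{M}$ (resp.\ $\overline{N}$), and reads off $\dim_R(\operatorname{H}^{t}_{\mathfrak{a}}(\overline{M},N))\leq n+1$ from the long exact sequence attached to $0\to\overline{M}\xrightarrow{a}\overline{M}\to\overline{M}/a\overline{M}\to 0$; a second exact sequence handles the $\Gamma_{\mathfrak{a}}$-torsion piece. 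Your reduction via $E_2^{p,q}=\operatorname{H}^{p}_{\mathfrak{a}}(\operatorname{Ext}^{q}_{R}(M,N))$ together with \cite[Theorem 2.1]{VKhSh1} is more conceptual and, pleasantly, treats the two hypotheses $\dim_R(M)\leq n+2$ and $\dim_R(N)\leq n+2$ in a single stroke.

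There is, however, a genuine gap in your transfer of $(\operatorname{FD}_{<n},\mathfrak{a})$-cofiniteness through the spectral sequence. The class of $(\operatorname{FD}_{<n},\mathfrak{a})$-cofinite modules is \emph{not} closed under subquotients in general: the long exact $\operatorname{Ext}$-sequence gives only the ``two out of three'' property, so from $0\to X'\to X\to X''\to 0$ with $X$ cofinite you cannot deduce that $X'$ or $X''$ is. (For $n=0$ this is exactly the question of whether the category of $\mathfrak{a}$-cofinite modules is abelian, which is known to fail without further hypotheses on $\mathfrak{a}$.) Hence you cannot push cofiniteness from $E_2^{p,q}$ down to $E_\infty^{p,q}$ as you propose. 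The repair is simple and is precisely what the paper does at this juncture: use your spectral-sequence argument only to conclude that every $\operatorname{H}^{i}_{\mathfrak{a}}(M,N)$ is $\operatorname{FD}_{<n+2}$ (this goes through, since $\operatorname{FD}_{<n+2}$ \emph{is} Serre), and then invoke \cite[Theorem~3.2]{VM2} to obtain $(\operatorname{FD}_{<n},\mathfrak{a})$-cofiniteness. Your argument for part~(ii) is fine as written.
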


\begin{proof}
(i). We first assume that $\dim_R(M)\leq n+ 2$. Set $\overline{M}= M/\Gamma_\mathfrak{a}(M)$. Since $\Gamma_\mathfrak{a}(\overline{M})= 0$, the ideal $\mathfrak{a}$ contains an element $a$ which is a non-zerodivisor on $\overline{M}$. Let $t$ be a non-negative integer. Since $a$ is not in any minimal member of $\operatorname{Supp}_R(\overline{M})$, we have $\dim_R(\overline{M}/a\overline{M})\leq n+ 1$ and so $\dim_R(\operatorname{H}^{t}_{\mathfrak{a}}(\overline{M}/a\overline{M}, N))\leq n+ 1$. The short exact sequence
\[0\longrightarrow \overline{M}\overset{a}\longrightarrow \overline{M}\longrightarrow \overline{M}/a\overline{M}\longrightarrow 0\]
induces an exact sequence
\[\operatorname{H}^{t}_{\mathfrak{a}}(\overline{M}/a\overline{M}, N)\longrightarrow \operatorname{H}^{t}_{\mathfrak{a}}(\overline{M}, N)\overset{a}\longrightarrow \operatorname{H}^{t}_{\mathfrak{a}}(\overline{M}, N).\]
Thus $\dim_R(\operatorname{Hom}_R(R/Ra, \operatorname{H}^{t}_{\mathfrak{a}}(\overline{M}, N)))\leq n+ 1$ and so $\dim_R(\operatorname{H}^{t}_{\mathfrak{a}}(\overline{M}, N))\leq n+ 1$ because, by \cite[Exercise 1.2.28]{BH}, $\operatorname{Ass}_R(\operatorname{Hom}_R(R/Ra, \operatorname{H}^{t}_{\mathfrak{a}}(\overline{M}, N)))= \operatorname{Ass}_R(\operatorname{H}^{t}_{\mathfrak{a}}(\overline{M}, N))$. Hence $\operatorname{H}^{t}_{\mathfrak{a}}(\overline{M}, N)$ is an $\operatorname{FD}_{< n+ 2}$ $R$-module. From the short exact sequence
\[0\longrightarrow \Gamma_\mathfrak{a}(M)\longrightarrow M\longrightarrow \overline{M}\longrightarrow 0\]
and \cite[Lemma 2.5(c)]{VA}, we have the exact sequence
\[\operatorname{H}^{t}_{\mathfrak{a}}(\overline{M}, N)\longrightarrow \operatorname{H}^{t}_{\mathfrak{a}}(M, N)\longrightarrow \operatorname{Ext}^{t}_{R}(\Gamma_\mathfrak{a}(M), N)\]
which shows that $\operatorname{H}^{t}_{\mathfrak{a}}(M, N)$ is an $\operatorname{FD}_{< n+ 2}$ $R$-module. Therefore $\operatorname{H}^{i}_{\mathfrak{a}}(M, N)$ is an $(\operatorname{FD}_{< n}, \mathfrak{a})$-cofinite $R$-module for all $i$ from \cite[Theorem 3.2]{VM2}.

Now, we assume that $\dim_R(N)\leq n+ 2$. Set $\overline{N}= N/\Gamma_\mathfrak{a}(N)$ and let $t$ be a non-negative integer. Then there is an element $a$ of $\mathfrak{a}$ which is a non-zerodivisor on $\overline{N}$ and so $\dim_R(\overline{N}/a\overline{N})\leq n+ 1$. Therefore $\dim_R(\operatorname{H}^{t- 1}_{\mathfrak{a}}(M, \overline{N}/a\overline{N}))\leq n+ 1$. From the short exact sequence
\[0\longrightarrow \overline{N}\overset{a}\longrightarrow \overline{N}\longrightarrow \overline{N}/a\overline{N}\longrightarrow 0\]
we get the exact sequence
\[\operatorname{H}^{t- 1}_{\mathfrak{a}}(M, \overline{N}/a\overline{N})\longrightarrow \operatorname{H}^{t}_{\mathfrak{a}}(M, \overline{N})\overset{a}\longrightarrow \operatorname{H}^{t}_{\mathfrak{a}}(M, \overline{N}).\]
Thus $\dim_R(\operatorname{Hom}_R(R/Ra, \operatorname{H}^{t}_{\mathfrak{a}}(M, \overline{N})))\leq n+ 1$ and so $\dim_R(\operatorname{H}^{t}_{\mathfrak{a}}(M, \overline{N}))\leq n+ 1$. Hence $\operatorname{H}^{t}_{\mathfrak{a}}(M, \overline{N})$ is an $\operatorname{FD}_{< n+ 2}$ $R$-module. By the short exact sequence
\[0\longrightarrow \Gamma_\mathfrak{a}(N)\longrightarrow N\longrightarrow \overline{N}\longrightarrow 0\]
and \cite[Lemma 2.5(c)]{VA}, we have the exact sequence
\[\operatorname{Ext}^{t}_{R}(M, \Gamma_\mathfrak{a}(N))\longrightarrow \operatorname{H}^{t}_{\mathfrak{a}}(M, N)\longrightarrow \operatorname{H}^{t}_{\mathfrak{a}}(M, \overline{N}).\]
It shows that $\operatorname{H}^{t}_{\mathfrak{a}}(M, N)$ is an $\operatorname{FD}_{< n+ 2}$ $R$-module. Therefore $\operatorname{H}^{i}_{\mathfrak{a}}(M, N)$ is an $(\operatorname{FD}_{< n}, \mathfrak{a})$-cofinite $R$-module for all $i$ from \cite[Theorem 3.2]{VM2}.

(ii). By the first part, $\operatorname{Hom}_R(R/\mathfrak{a},\operatorname{H}^{i}_{\mathfrak{a}}(M, N))$ is an $\operatorname{FD}_{< n}$ $R$-module for all $i$ and hence $\operatorname{Ass}_R(\operatorname{Hom}_R(R/\mathfrak{a},\operatorname{H}^{i}_{\mathfrak{a}}(M, N)))_{\geq n}$ is finite for all $i$. Thus $\operatorname{Ass}_R(\operatorname{H}^{i}_{\mathfrak{a}}(M, N))_{\geq n}$ is a finite set for all $i$ from \cite[Exercise 1.2.28]{BH}.
\end{proof}


\begin{cor}\label{2-2}
Let $M$ and $N$ be two finite $R$-modules such that $\dim_R(M)\leq 2$ or $\dim_R(N)\leq 2$. Then $\operatorname{H}^{i}_{\mathfrak{a}}(M, N)$ is an $\mathfrak{a}$-cofinite $\operatorname{FD}_{< 2}$ $R$-module and $\operatorname{Ass}_R(\operatorname{H}^{i}_{\mathfrak{a}}(M, N))$ is a finite set for all $i$.
\end{cor}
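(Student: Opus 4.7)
The plan is to derive Corollary 2.2 as the special case $n = 0$ of Theorem 2.1. Recall from the introduction that $\operatorname{FD}_{<0}$ coincides with the class of finite $R$-modules and that $(\operatorname{FD}_{<0}, \mathfrak{a})$-cofinite is synonymous with $\mathfrak{a}$-cofinite. Thus if $\dim_R(M)\leq 2 = n+2$ or $\dim_R(N)\leq 2 = n+2$ with $n=0$, then Theorem~\ref{2-1}(i) immediately yields that $\operatorname{H}^{i}_{\mathfrak{a}}(M, N)$ is an $\mathfrak{a}$-cofinite $\operatorname{FD}_{<2}$ $R$-module for every $i$.

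For the statement about associated primes, I would note that once we know $\operatorname{H}^{i}_{\mathfrak{a}}(M, N)$ is $\mathfrak{a}$-cofinite, the set $\operatorname{Ass}_R(\operatorname{H}^{i}_{\mathfrak{a}}(M, N))$ is finite for all $i$. This is the standard fact recalled in the introduction (right after the statement of Question~\ref{1-2}): any $\mathfrak{a}$-cofinite $R$-module $X$ has $\operatorname{Ass}_R(X)$ finite, because $\operatorname{Ass}_R(X) = \operatorname{Ass}_R(\operatorname{Hom}_R(R/\mathfrak{a}, X))$ by \cite[Exercise 1.2.28]{BH} and the latter is finite since $\operatorname{Hom}_R(R/\mathfrak{a}, X)$ is finite. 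Alternatively, one can read this directly from Theorem~\ref{2-1}(ii): with $n=0$, the condition $\dim(R/\mathfrak{p})\geq 0$ is vacuous, so $\operatorname{Ass}_R(\operatorname{H}^{i}_{\mathfrak{a}}(M, N))_{\geq 0} = \operatorname{Ass}_R(\operatorname{H}^{i}_{\mathfrak{a}}(M, N))$ is finite.

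There is essentially no obstacle here; the corollary is a direct specialization. The only care needed is to correctly translate the hypothesis $\dim_R(M) \leq 2$ (respectively $\dim_R(N) \leq 2$) into the assumption of Theorem~\ref{2-1} by taking $n = 0$, and to invoke the two identifications $\operatorname{FD}_{<0} = \{\text{finite } R\text{-modules}\}$ and $(\operatorname{FD}_{<0}, \mathfrak{a})\text{-cofinite} = \mathfrak{a}\text{-cofinite}$ explicitly so that no ambiguity remains.
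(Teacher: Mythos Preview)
Your proposal is correct and matches the paper's approach exactly: the paper's proof is the single line ``Take $n= 0$ in Theorem \ref{2-1}.'' Your additional explanation of why the $n=0$ specialization yields the stated conclusions (via the identifications $\operatorname{FD}_{<0}=\{\text{finite modules}\}$ and $(\operatorname{FD}_{<0},\mathfrak{a})$-cofinite $=$ $\mathfrak{a}$-cofinite, and the observation that $\operatorname{Ass}_R(-)_{\geq 0}=\operatorname{Ass}_R(-)$) is accurate and simply spells out what the paper leaves implicit.
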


\begin{proof}
Take $n= 0$ in Theorem \ref{2-1}.
\end{proof}


Recall that $X$ is said to be a \textit{weakly Laskerian $R$-module} if the set of associated prime ideals of any quotient module of $X$ is finite \cite[Definition 2.1]{DM1}. Also, we say that $X$ is an \textit{$\mathfrak{a}$-weakly cofinite $R$-module} if $X$ is an $\mathfrak{a}$-torsion $R$-module and $\operatorname{Ext}^{i}_{R}(R/\mathfrak{a}, X)$ is a weakly Laskerian $R$-module for all $i$ \cite[Definition 2.4]{DM2}. 
The next corollary prepares affirmative answer to Question \ref{1-6} for the case that $R$ is semi-local and $\dim_R(M)\leq 3$ or $\dim_R(N)\leq 3$ (see \cite[Corollary 5.3]{CGH} for the case that $R$ is a local ring).

\begin{cor}\label{2-3}
Let $R$ be a semi-local ring and let $M$ and $N$ be two finite $R$-modules such that $\dim_R(M)\leq 3$ or $\dim_R(N)\leq 3$. Then $\operatorname{H}^{i}_{\mathfrak{a}}(M, N)$ is an $\mathfrak{a}$-weakly cofinite $\operatorname{FD}_{< 3}$ $R$-module and $\operatorname{Ass}_R(\operatorname{H}^{i}_{\mathfrak{a}}(M, N))$ is a finite set for all $i$.
\end{cor}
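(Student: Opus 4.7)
The strategy is to specialize Theorem \ref{2-1} to the case $n = 1$, since the hypothesis $\dim_R(M) \leq 3$ (or $\dim_R(N) \leq 3$) matches exactly the bound $n + 2 = 3$. This immediately yields that $\operatorname{H}^{i}_{\mathfrak{a}}(M, N)$ is an $(\operatorname{FD}_{< 1}, \mathfrak{a})$-cofinite $\operatorname{FD}_{< 3}$ $R$-module and that $\operatorname{Ass}_R(\operatorname{H}^{i}_{\mathfrak{a}}(M, N))_{\geq 1}$ is a finite set for all $i$. The $\operatorname{FD}_{< 3}$ assertion of the corollary is then already part of the conclusion, and nothing more is required for it.

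For the finiteness of $\operatorname{Ass}_R(\operatorname{H}^{i}_{\mathfrak{a}}(M, N))$, I would decompose this set as the disjoint union of its subset with $\dim(R/\mathfrak{p}) \geq 1$ and its subset with $\dim(R/\mathfrak{p}) = 0$. Theorem \ref{2-1}(ii) takes care of the first part. The second part is contained in $\operatorname{Max}(R)$, which is finite by the semi-local hypothesis. So the union is finite, as required.

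The one step that is not an immediate quotation of Theorem \ref{2-1} is the passage from $(\operatorname{FD}_{< 1}, \mathfrak{a})$-cofiniteness to $\mathfrak{a}$-weak cofiniteness, and this is where I expect the only real content of the argument to lie. Concretely, I would verify that over a semi-local ring every $\operatorname{FD}_{< 1}$ $R$-module $Y$ is weakly Laskerian. Pick a finite submodule $Y' \subseteq Y$ with $\dim_R(Y/Y') \leq 0$. For an arbitrary quotient $Y/K$, the short exact sequence
\[0 \longrightarrow Y'/(Y' \cap K) \longrightarrow Y/K \longrightarrow Y/(Y' + K) \longrightarrow 0\]
gives $\operatorname{Ass}_R(Y/K) \subseteq \operatorname{Ass}_R(Y'/(Y' \cap K)) \cup \operatorname{Ass}_R(Y/(Y' + K))$. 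The first set is finite because $Y'/(Y' \cap K)$ is a finite $R$-module; the second is finite because $Y/(Y' + K)$ is a quotient of $Y/Y'$, so its support is contained in $\operatorname{Max}(R)$, which is finite by the semi-local assumption, whence $\operatorname{Ass}_R(Y/(Y' + K)) \subseteq \operatorname{Max}(R)$ is finite as well. Applying this observation to $Y = \operatorname{Ext}^{j}_{R}(R/\mathfrak{a}, \operatorname{H}^{i}_{\mathfrak{a}}(M, N))$, which is $\operatorname{FD}_{< 1}$ for all $i, j$ by Theorem \ref{2-1}(i), shows that each such $\operatorname{Ext}$ is weakly Laskerian, so $\operatorname{H}^{i}_{\mathfrak{a}}(M, N)$ is $\mathfrak{a}$-weakly cofinite. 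The main (and essentially only) obstacle is this semi-local enhancement of $\operatorname{FD}_{< 1}$-ness to weak Laskerianness; everything else is direct bookkeeping.
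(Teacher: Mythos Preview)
Your proposal is correct and follows the same route as the paper: set $n=1$ in Theorem~\ref{2-1} and then upgrade the $(\operatorname{FD}_{<1},\mathfrak{a})$-cofiniteness to $\mathfrak{a}$-weak cofiniteness using the semi-local hypothesis. The only difference is that the paper outsources the implication ``$\operatorname{FD}_{<1}$ over a semi-local ring $\Rightarrow$ weakly Laskerian'' to \cite[Theorem~3.3]{B}, whereas you supply a short self-contained argument for it; your argument is valid and in fact recovers exactly what that citation provides in this context.
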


\begin{proof}
By considering \cite[Theorem 3.3]{B}, put $n= 1$ in Theorem \ref{2-1}.
\end{proof}


By taking $M=R$ in Theorem \ref{2-1} and Corollaries \ref{2-2} and \ref{2-3}, we have the following results for the ordinary local cohomology modules.

\begin{cor}\label{2-4}
Let $N$ be a finite $R$-module such that $\dim_R(N)\leq n+ 2$. Then the following statements hold true:
\begin{itemize}
\item[\emph{(i)}] $\operatorname{H}^{i}_{\mathfrak{a}}(N)$ is an $(\operatorname{FD}_{< n}, \mathfrak{a})$-cofinite $\operatorname{FD}_{< n+ 2}$ $R$-module for all $i$;
\item[\emph{(ii)}] $\operatorname{Ass}_R(\operatorname{H}^{i}_{\mathfrak{a}}(N))_{\geq n}$ is a finite set for all $i$.
\end{itemize}
\end{cor}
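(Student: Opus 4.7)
The plan is to deduce Corollary \ref{2-4} as an immediate specialization of Theorem \ref{2-1} by setting $M = R$. The key observation is that Herzog's generalized local cohomology recovers ordinary local cohomology in this case: by the defining direct limit formula one has a canonical isomorphism $\operatorname{H}^{i}_{\mathfrak{a}}(R, N) \cong \operatorname{H}^{i}_{\mathfrak{a}}(N)$, a fact recorded in the introduction right after Herzog's definition. So the corollary is really just a translation of the theorem through this identification.

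First I would verify the hypothesis of Theorem \ref{2-1} under the identification $M = R$. The theorem asks that \emph{either} $\dim_R(M) \leq n+2$ \emph{or} $\dim_R(N) \leq n+2$; since the second disjunct is precisely the standing assumption $\dim_R(N) \leq n+2$ of the corollary, the hypothesis is satisfied regardless of $\dim(R)$. (This is important because one does not want to assume $\dim(R) \leq n+2$; the corollary must work for arbitrary Noetherian $R$ once $N$ has small dimension.) Note also that $R$ is a finite $R$-module over itself, so the requirement that $M$ be finite is harmless.

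Next I would apply Theorem \ref{2-1}(i) to $\operatorname{H}^{i}_{\mathfrak{a}}(R, N)$ to conclude that this module is $(\operatorname{FD}_{<n}, \mathfrak{a})$-cofinite and belongs to $\operatorname{FD}_{<n+2}$ for every $i$, and then rewrite it as $\operatorname{H}^{i}_{\mathfrak{a}}(N)$. This gives (i). For (ii), I would apply Theorem \ref{2-1}(ii) in the same way to conclude that $\operatorname{Ass}_R(\operatorname{H}^{i}_{\mathfrak{a}}(R, N))_{\geq n} = \operatorname{Ass}_R(\operatorname{H}^{i}_{\mathfrak{a}}(N))_{\geq n}$ is finite.

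There is essentially no obstacle here: the entire content is absorbed in the machinery of Theorem \ref{2-1} and in the already-cited isomorphism $\operatorname{H}^{i}_{\mathfrak{a}}(R, -) \cong \operatorname{H}^{i}_{\mathfrak{a}}(-)$. Thus the proof reduces to a single sentence: take $M = R$ in Theorem \ref{2-1} and use $\operatorname{H}^{i}_{\mathfrak{a}}(R, N) \cong \operatorname{H}^{i}_{\mathfrak{a}}(N)$.
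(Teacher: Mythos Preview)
Your proposal is correct and matches the paper's approach exactly: the paper derives Corollary \ref{2-4} by taking $M=R$ in Theorem \ref{2-1}, using the identification $\operatorname{H}^{i}_{\mathfrak{a}}(R,N)\cong \operatorname{H}^{i}_{\mathfrak{a}}(N)$ noted in the introduction.
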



\begin{cor}\label{2-5}
Let $N$ be a finite $R$-module such that $\dim_R(N)\leq 2$. Then $\operatorname{H}^{i}_{\mathfrak{a}}(N)$ is an $\mathfrak{a}$-cofinite $\operatorname{FD}_{< 2}$ $R$-module and $\operatorname{Ass}_R(\operatorname{H}^{i}_{\mathfrak{a}}(N))$ is a finite set for all $i$.
\end{cor}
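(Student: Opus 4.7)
The plan is to obtain Corollary \ref{2-5} as an immediate specialization of the two preceding results. Corollary \ref{2-4} is stated for a general non-negative integer $n$, and the final Corollary \ref{2-5} is exactly its $n=0$ instance, so the whole proof is essentially a bookkeeping step about what $\operatorname{FD}_{<0}$, $(\operatorname{FD}_{<0},\mathfrak{a})$-cofiniteness and $\operatorname{Ass}_R(-)_{\geq 0}$ mean.

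First I would invoke Corollary \ref{2-4} with $n=0$; this is legitimate because the hypothesis $\dim_R(N)\leq n+2$ becomes $\dim_R(N)\leq 2$, which is precisely our assumption. From part (i) of that corollary we then get that $\operatorname{H}^{i}_{\mathfrak{a}}(N)$ is an $(\operatorname{FD}_{<0},\mathfrak{a})$-cofinite $\operatorname{FD}_{<2}$ $R$-module for all $i$, and from part (ii) that $\operatorname{Ass}_R(\operatorname{H}^{i}_{\mathfrak{a}}(N))_{\geq 0}$ is finite for all $i$.

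Next I would translate these $n=0$ statements into the language of Corollary \ref{2-5}. As the paper recalls in Section \ref{1}, $\operatorname{FD}_{<0}$ coincides with the class of finite $R$-modules, and the class of $(\operatorname{FD}_{<0},\mathfrak{a})$-cofinite modules coincides with the class of $\mathfrak{a}$-cofinite modules; hence $\operatorname{H}^{i}_{\mathfrak{a}}(N)$ is $\mathfrak{a}$-cofinite. Moreover, for every prime ideal $\mathfrak{p}$ of $R$ one has $\dim(R/\mathfrak{p})\geq 0$, so $\operatorname{Ass}_R(\operatorname{H}^{i}_{\mathfrak{a}}(N))_{\geq 0}$ is literally $\operatorname{Ass}_R(\operatorname{H}^{i}_{\mathfrak{a}}(N))$, which yields the finiteness claim. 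Alternatively and just as quickly, one can apply Corollary \ref{2-2} with $M=R$, since $\operatorname{H}^{i}_{\mathfrak{a}}(R,N)\cong\operatorname{H}^{i}_{\mathfrak{a}}(N)$ and the dimension condition on $N$ is satisfied.

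There is no real obstacle here; the only subtlety is the notational identification $\operatorname{FD}_{<0}=\{\text{finite modules}\}$ and $(\operatorname{FD}_{<0},\mathfrak{a})\text{-cofinite}=\mathfrak{a}\text{-cofinite}$, which has already been explicitly noted in the introduction, so the proof reduces to a single sentence: take $n=0$ in Corollary \ref{2-4} (equivalently, $M=R$ in Corollary \ref{2-2}).
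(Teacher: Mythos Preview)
Your proposal is correct and matches the paper's approach: the paper obtains Corollary~\ref{2-5} simply by taking $M=R$ in Corollary~\ref{2-2} (as announced in the sentence preceding Corollaries~\ref{2-4}--\ref{2-6}), which is exactly one of the two equivalent specializations you describe.
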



\begin{cor}\label{2-6}
Let $R$ be a semi-local ring and let $N$ be a finite $R$-module such that $\dim_R(N)\leq 3$. Then $\operatorname{H}^{i}_{\mathfrak{a}}(N)$ is an $\mathfrak{a}$-weakly cofinite $\operatorname{FD}_{< 3}$ $R$-module and $\operatorname{Ass}_R(\operatorname{H}^{i}_{\mathfrak{a}}(N))$ is a finite set for all $i$.
\end{cor}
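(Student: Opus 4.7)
The plan is to deduce this result as an immediate specialization of Corollary \ref{2-3} to the case $M := R$. Recall from Herzog's definition in the introduction that $\operatorname{H}^i_\mathfrak{a}(R, X) \cong \operatorname{H}^i_\mathfrak{a}(X)$ for every $R$-module $X$ and every $i$, so applying Corollary \ref{2-3} with the finite modules $R$ and $N$ in place of $M$ and $N$ is exactly the right move. The hypotheses of Corollary \ref{2-3} demand that $R$ be semi-local and that $\dim_R(M) \leq 3$ or $\dim_R(N) \leq 3$; both conditions are granted by assumption (the second clause is our dimension bound on $N$), so the corollary's conclusion transfers verbatim.

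More concretely, first I would invoke Herzog's isomorphism to rewrite each $\operatorname{H}^i_\mathfrak{a}(N)$ as $\operatorname{H}^i_\mathfrak{a}(R, N)$. Then I would verify the hypotheses for Corollary \ref{2-3}: $R$ is semi-local by assumption, $R$ and $N$ are both finite $R$-modules, and $\dim_R(N) \leq 3$. Finally, applying Corollary \ref{2-3} yields simultaneously that $\operatorname{H}^i_\mathfrak{a}(R, N)$ is an $\mathfrak{a}$-weakly cofinite $\operatorname{FD}_{<3}$ $R$-module and that $\operatorname{Ass}_R(\operatorname{H}^i_\mathfrak{a}(R, N))$ is finite for every $i$; translating back through the isomorphism gives the stated conclusions for the ordinary local cohomology modules $\operatorname{H}^i_\mathfrak{a}(N)$.

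There is essentially no obstacle here: the entire content of the corollary is packaged into Corollary \ref{2-3} (which itself was obtained from Theorem \ref{2-1} with $n = 1$ together with \cite[Theorem 3.3]{B}), and the only verification required is that the chosen $M = R$ does not violate any hypothesis. Consequently, the proof can be reduced to a single sentence of the form ``Take $M = R$ in Corollary \ref{2-3}'', in exact parallel with how Corollary \ref{2-5} is deduced from Corollary \ref{2-2} earlier in this section.
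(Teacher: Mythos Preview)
Your proposal is correct and matches the paper's approach exactly: the paper obtains Corollary~\ref{2-6} by taking $M=R$ in Corollary~\ref{2-3}, precisely as you describe.
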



From \cite[Corollary 2.6]{BNS}, $\operatorname{Ext}^{i}_{R}(R/\mathfrak{a}, X)$ is a finite $R$-module for all $i$ whenever $\dim(R/\mathfrak{a})= 1$ and $\operatorname{Ext}^{i}_{R}(R/\mathfrak{a}, X)$ is a finite $R$-module for all $i\leq \dim_R(X)$. In \cite[Theorem 2.5]{VKhSh1}, we generalized and improved \cite[Corollary 2.6]{BNS} by showing that $\operatorname{Ext}^{i}_{R}(R/\mathfrak{a}, X)$ is an $\operatorname{FD}_{< n}$ $R$-module for all $i$ if $\operatorname{H}^{i}_{\mathfrak{a}}(X)$ is an $\operatorname{FD}_{< n+ 2}$ $R$-module for all $i< \dim_R(X)- n$ $($e.g., $\dim(R/\mathfrak{a})\leq n+ 1)$ and $\operatorname{Ext}^{i}_{R}(R/\mathfrak{a}, X)$ is an $\operatorname{FD}_{< n}$ $R$-module for all $i\leq \dim_R(X)- n$. As a generalization of \cite[Theorem 2.5]{VKhSh1}, we have the following theorem which is also related to Questions \ref{1-7} and \ref{1-8}.

\begin{thm}\label{2-7}
Let $M$ be a finite $R$-module with $\operatorname{pd}_R(M)< \infty$ and let $X$ be an arbitrary $R$-module such that $\operatorname{H}^{i}_{\mathfrak{a}}(M, X)$ is an $\operatorname{FD}_{< n+ 2}$ $R$-module for all $i< \operatorname{pd}_R(M)+ \dim_R(X)- n$ $($resp. $i< \dim(R)- n)$ $($e.g., $\dim(R/\mathfrak{a})\leq n+ 1)$ and $\operatorname{Ext}^{i}_{R}(M/\mathfrak{a}M, X)$ is an $\operatorname{FD}_{< n}$ $R$-module for all $i\leq \operatorname{pd}_R(M)+ \dim_R(X)- n$ $($resp. $i\leq \dim(R)- n)$. Then the following statements hold true:
\begin{itemize}
\item[\emph{(i)}] $\operatorname{H}^{i}_{\mathfrak{a}}(M, X)$ is an $(\operatorname{FD}_{< n}, \mathfrak{a})$-cofinite $R$-module for all $i$;
\item[\emph{(ii)}] $\operatorname{Ass}_R(\operatorname{H}^{i}_{\mathfrak{a}}(M, X))_{\geq n}$ is a finite set for all $i$;
\item[\emph{(iii)}] $\operatorname{Ext}^{i}_{R}(M/\mathfrak{a}M, X)$ is an $\operatorname{FD}_{< n}$ $R$-module for all $i$.
\end{itemize}
\end{thm}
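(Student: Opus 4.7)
The plan is to show (i) first, then derive (ii) and (iii) from it.

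\emph{Part (i).} The goal is to establish that $\operatorname{H}^{i}_{\mathfrak{a}}(M, X) \in \operatorname{FD}_{<n+2}$ for \emph{every} $i$, after which \cite[Theorem 3.2]{VM2} (the same tool invoked in the proof of Theorem~\ref{2-1}) yields the desired $(\operatorname{FD}_{<n}, \mathfrak{a})$-cofiniteness. Setting $D := \operatorname{pd}_R(M) + \dim_R(X)$ (resp.\ $D := \dim(R)$), the hypothesis directly supplies this for $i < D - n$, and $\operatorname{H}^{i}_{\mathfrak{a}}(M, X) = 0$ for $i > D$ by the standard vanishing theorem for generalized local cohomology (using $\operatorname{pd}_R(M) < \infty$ combined with $\operatorname{cd}_{\mathfrak{a}}(X) \leq \dim_R(X)$, resp.\ $\leq \dim(R)$). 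The remaining indices $D - n \leq i \leq D$ are handled by a descending induction on $i$: starting from $i = D+1$ (where the module vanishes), the induction exploits a Grothendieck-type spectral sequence coming from the factorization $\operatorname{Hom}_R(M/\mathfrak{a}M, -) = \operatorname{Hom}_R(R/\mathfrak{a}, -) \circ \operatorname{Hom}_R(M, \Gamma_{\mathfrak{a}}(-))$, together with the hypothesis $\operatorname{Ext}^{i}_R(M/\mathfrak{a}M, X) \in \operatorname{FD}_{<n}$ for $i \leq D - n$, to propagate the $\operatorname{FD}_{<n+2}$ property downward through the unknown range.

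\emph{Part (ii).} This is immediate from (i) by exactly the argument of Theorem~\ref{2-1}(ii): (i) yields $\operatorname{Hom}_R(R/\mathfrak{a}, \operatorname{H}^{i}_{\mathfrak{a}}(M, X)) \in \operatorname{FD}_{<n}$, so the set of its associated primes $\mathfrak{p}$ with $\dim(R/\mathfrak{p}) \geq n$ is finite, and these coincide with $\operatorname{Ass}_R(\operatorname{H}^{i}_{\mathfrak{a}}(M, X))_{\geq n}$ by \cite[Exercise 1.2.28]{BH}.

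\emph{Part (iii).} Using (i), for all $p, q$ we have $\operatorname{Ext}^p_R(R/\mathfrak{a}, \operatorname{H}^q_{\mathfrak{a}}(M, X)) \in \operatorname{FD}_{<n}$. Feeding this into the Grothendieck-type spectral sequence
\[
E_2^{p,q} = \operatorname{Ext}^p_R(R/\mathfrak{a}, \operatorname{H}^q_{\mathfrak{a}}(M, X)) \;\Longrightarrow\; \operatorname{Ext}^{p+q}_R(M/\mathfrak{a}M, X)
\]
built from the factorization above, and using that $\operatorname{FD}_{<n}$ is a Serre subcategory—so every page $E_r^{p,q}$ and every $E_\infty^{p,q}$ remain in $\operatorname{FD}_{<n}$, and the filtered abutment therefore does too—we obtain (iii).

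\emph{Main obstacle.} The hardest step is the descending induction in (i). The subtlety is that the spectral sequence is only \emph{partially} controlled: the rows of the $E_2$-page corresponding to $q < D - n$ lie in $\operatorname{FD}_{<n+2}$ by hypothesis, but the intermediate rows $D - n \leq q \leq D$ are unknown at the outset, so one must use the vanishing for $q > D$ and the Ext-hypothesis for $i \leq D - n$ as boundary conditions and carefully track the differentials in order to close the induction. A potentially cleaner alternative that mirrors the proof of Theorem~\ref{2-1} is to replace $X$ by $\bar{X} := X/\Gamma_{\mathfrak{a}}(X)$, pick $a \in \mathfrak{a}$ regular on $\bar{X}$, and induct on $\dim_R(X)$ via the short exact sequence $0 \to \bar{X} \xrightarrow{a} \bar{X} \to \bar{X}/a\bar{X} \to 0$; the key technical check is then that the hypotheses of the theorem transfer to $\bar{X}/a\bar{X}$, whose dimension has dropped by one.
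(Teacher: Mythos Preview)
Your plan for (ii) and (iii) is sound and essentially matches the paper (the paper phrases (iii) as an induction using \cite[Lemma 2.3]{VP2} rather than the spectral sequence, but the content is the same). The gap is in (i).

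The paper does \emph{not} reach $\operatorname{FD}_{<n+2}$ in the range $D-n\le i\le D$ by any spectral-sequence descent. Instead it proves the a priori bound $\dim_R\operatorname{H}^{t}_{\mathfrak{a}}(M,X)\le D-t$ for every $t$ by a one-line localization: if some $\mathfrak{p}\in\operatorname{Supp}_R(\operatorname{H}^{t}_{\mathfrak{a}}(M,X))$ had $\dim(R/\mathfrak{p})>D-t$, then after localizing at $\mathfrak{p}$ one would have $\operatorname{H}^{t}_{\mathfrak{a}R_\mathfrak{p}}(M_\mathfrak{p},X_\mathfrak{p})\ne 0$ with $t>\operatorname{pd}_{R_\mathfrak{p}}(M_\mathfrak{p})+\dim_{R_\mathfrak{p}}(X_\mathfrak{p})$ (resp.\ $t>\dim R_\mathfrak{p}$), contradicting \cite[Proposition 2.8]{HV} (resp.\ \cite[Corollary 3.2]{CH}). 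This is strictly stronger than your target: it yields $\operatorname{H}^{i}_{\mathfrak{a}}(M,X)\in\operatorname{FD}_{<n}$ outright for $i>D-n$ (hence trivially $(\operatorname{FD}_{<n},\mathfrak{a})$-cofinite there) and $\operatorname{FD}_{<n+1}$ at $i=D-n$. The case $i=D-n$ is then finished separately: \cite[Theorem 3.2(i)]{VM2} gives cofiniteness for $i<D-n$, \cite[Lemma 2.4]{VP2} then forces $\operatorname{Hom}_R(R/\mathfrak{a},\operatorname{H}^{D-n}_{\mathfrak{a}}(M,X))\in\operatorname{FD}_{<n}$, and \cite[Lemma 2.1]{VM} upgrades ``$\operatorname{FD}_{<n+1}$ plus this Hom condition'' to $(\operatorname{FD}_{<n},\mathfrak{a})$-cofiniteness.

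Your descending induction, by contrast, does not close: at the first step $i=D$ the outgoing differentials from $E_r^{0,D}$ land in the still-unknown rows $q<D$, and the abutment $\operatorname{Ext}^{D}_R(M/\mathfrak{a}M,X)$ is not covered by the hypothesis when $n\ge 1$, so nothing bounds $E_2^{0,D}$. Even if you could show $\operatorname{H}^{i}_{\mathfrak{a}}(M,X)\in\operatorname{FD}_{<n+2}$ for all $i$, invoking \cite[Theorem 3.2]{VM2} to get cofiniteness for $i\ge D-n$ would require $\operatorname{Ext}^{i}_R(M/\mathfrak{a}M,X)\in\operatorname{FD}_{<n}$ beyond $i=D-n$, which is exactly what (iii) asserts and you have not yet proved---so the argument would be circular. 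Finally, your alternative via $\bar X=X/\Gamma_{\mathfrak{a}}(X)$ and an $\mathfrak{a}$-regular element does not survive the passage from finite $N$ to arbitrary $X$: $\operatorname{Ass}_R(\bar X)$ need not be finite, prime avoidance is unavailable, and the dimension drop $\dim_R(\bar X/a\bar X)<\dim_R(\bar X)$ is not guaranteed.
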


\begin{proof}
We first show that for a non-negative integer $t$, $\operatorname{H}^{t}_{\mathfrak{a}}(M, X)$ is an $\operatorname{FD}_{< \operatorname{pd}_R(M)+ \dim_R(X)- t+ 1}$ $($resp. $\operatorname{FD}_{< \dim(R)- t+ 1})$ $R$-module. Assume contrarily that the $R$-module $\operatorname{H}^{t}_{\mathfrak{a}}(M, X)$ is not $\operatorname{FD}_{< \operatorname{pd}_R(M)+ \dim_R(X)- t+ 1}$ (resp. $\operatorname{FD}_{< \dim(R)- t+ 1}$). Then $\dim_R(\operatorname{H}^{t}_{\mathfrak{a}}(M, X))> \operatorname{pd}_R(M)+ \dim_R(X)- t$ (resp. $\dim_R(\operatorname{H}^{t}_{\mathfrak{a}}(M, X))> \dim(R)- t$) and so there exists a prime ideal $\mathfrak{p}$ of $\operatorname{Supp}_R(\operatorname{H}^{t}_{\mathfrak{a}}(M, X))$ such that $\dim(R/\mathfrak{p})> \operatorname{pd}_R(M)+ \dim_R(X)- t$ (resp. $\dim(R/\mathfrak{p})> \dim(R)- t$). Hence, by \cite[Lemma 2.5(b)]{VA}, $\operatorname{H}^{t}_{\mathfrak{a}R_\mathfrak{p}}(M_\mathfrak{p}, X_\mathfrak{p})\neq 0$ and $t> \operatorname{pd}_{R_\mathfrak{p}}(M_\mathfrak{p})+ \dim_{R_\mathfrak{p}}(X_\mathfrak{p})$ (resp. $t> \dim(R_\mathfrak{p})$) which contradicts \cite[Proposition 2.8]{HV} (resp. \cite[Corollary 3.2]{CH}).

(i). From the first paragraph of the proof, $\operatorname{H}^{i}_{\mathfrak{a}}(M, X)$ is an $\operatorname{FD}_{< n}$ $R$-module and so is an $(\operatorname{FD}_{< n}, \mathfrak{a})$-cofinite $R$-module for all $i> \operatorname{pd}_R(M)+ \dim_R(X)- n$ (resp. $i> \dim(R)- n$). Also, by \cite[Theorem 3.2(i)]{VM2}, $\operatorname{H}^{i}_{\mathfrak{a}}(M, X)$ is an $(\operatorname{FD}_{< n}, \mathfrak{a})$-cofinite $R$-module for all $i< \operatorname{pd}_R(M)+ \dim_R(X)- n$ (resp. $i< \dim(R)- n$). Therefore $\operatorname{Hom}_{R}(R/\mathfrak{a}, \operatorname{H}^{\operatorname{pd}_R(M)+ \dim_R(X)- n}_{\mathfrak{a}}(M, X))$ (resp. $\operatorname{Hom}_{R}(R/\mathfrak{a}, \operatorname{H}^{\dim(R)- n}_{\mathfrak{a}}(M, X))$) is an $\operatorname{FD}_{< n}$ $R$-module from \cite[Lemma 2.4]{VP2}. On the other hand, again by the first paragraph of the proof, $\operatorname{H}^{\operatorname{pd}_R(M)+ \dim_R(X)- n}_{\mathfrak{a}}(M, X)$ (resp. $\operatorname{H}^{\dim(R)- n}_{\mathfrak{a}}(M, X)$) is an $\operatorname{FD}_{< n+ 1}$ $R$-module. Thus $\operatorname{H}^{\operatorname{pd}_R(M)+ \dim_R(X)- n}_{\mathfrak{a}}(M, X)$ (resp. $\operatorname{H}^{\dim(R)- n}_{\mathfrak{a}}(M, X)$) is an $(\operatorname{FD}_{< n}, \mathfrak{a})$-cofinite $R$-module from \cite[Lemma 2.1]{VM}.

(ii). This follows by the first part and \cite[Exercise 1.2.28]{BH}.

(iii). It follows from the first part, \cite[Lemma 2.3]{VP2}, and using an induction argument on $i$.
\end{proof}


As immediate applications of the above theorem, we have the following corollaries.

\begin{cor}\label{2-8}
Let $M$ be a finite $R$-module with $\operatorname{pd}_R(M)< \infty$ and let $X$ be an arbitrary $R$-module such that $\operatorname{H}^{i}_{\mathfrak{a}}(M, X)$ is an $\operatorname{FD}_{< 2}$ $R$-module for all $i< \operatorname{pd}_R(M)+ \dim_R(X)$ $($resp. $i< \dim(R))$ $($e.g., $\dim(R/\mathfrak{a})\leq 1)$ and $\operatorname{Ext}^{i}_{R}(M/\mathfrak{a}M, X)$ is a finite $R$-module for all $i\leq \operatorname{pd}_R(M)+ \dim_R(X)$ $($resp. $i\leq \dim(R))$. Then, for all $i$, $\operatorname{H}^{i}_{\mathfrak{a}}(M, X)$ is an $\mathfrak{a}$-cofinite $R$-module, $\operatorname{Ass}_R(\operatorname{H}^{i}_{\mathfrak{a}}(M, X))$ is a finite set, and $\operatorname{Ext}^{i}_{R}(M/\mathfrak{a}M, X)$ is a finite $R$-module.
\end{cor}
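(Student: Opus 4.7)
The plan is to obtain this as the special case $n=0$ of Theorem~\ref{2-7}. The key bookkeeping observation, already recorded in the introduction, is that $\operatorname{FD}_{<0}$ is exactly the class of finite $R$-modules, and consequently the class of $(\operatorname{FD}_{<0},\mathfrak{a})$-cofinite $R$-modules is exactly the class of $\mathfrak{a}$-cofinite $R$-modules. With this identification the hypotheses of Corollary~\ref{2-8} become precisely what Theorem~\ref{2-7} requires when one takes $n=0$.

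First I would substitute $n=0$ into Theorem~\ref{2-7}. The hypothesis ``$\operatorname{H}^{i}_{\mathfrak{a}}(M,X)$ is an $\operatorname{FD}_{<n+2}$ $R$-module for all $i<\operatorname{pd}_R(M)+\dim_R(X)-n$ (resp.\ $i<\dim(R)-n$)'' becomes exactly ``$\operatorname{H}^{i}_{\mathfrak{a}}(M,X)$ is an $\operatorname{FD}_{<2}$ $R$-module for all $i<\operatorname{pd}_R(M)+\dim_R(X)$ (resp.\ $i<\dim(R)$),'' matching the first assumption of the corollary, and the parenthetical sufficient condition $\dim(R/\mathfrak{a})\leq n+1$ specializes to $\dim(R/\mathfrak{a})\leq 1$. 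The second hypothesis of Theorem~\ref{2-7}, that $\operatorname{Ext}^{i}_{R}(M/\mathfrak{a}M,X)$ is $\operatorname{FD}_{<0}$ in the relevant range, becomes the finiteness hypothesis on $\operatorname{Ext}^{i}_{R}(M/\mathfrak{a}M,X)$ in the same range.

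Next I would read off the three conclusions of Theorem~\ref{2-7} with $n=0$: part~(i) gives that $\operatorname{H}^{i}_{\mathfrak{a}}(M,X)$ is $(\operatorname{FD}_{<0},\mathfrak{a})$-cofinite, i.e.\ $\mathfrak{a}$-cofinite, for all $i$; part~(iii) gives that $\operatorname{Ext}^{i}_{R}(M/\mathfrak{a}M,X)$ is $\operatorname{FD}_{<0}$, i.e.\ finite, for all $i$; and part~(ii) says that $\operatorname{Ass}_R(\operatorname{H}^{i}_{\mathfrak{a}}(M,X))_{\geq 0}$ is finite for all $i$. The last conclusion coincides with finiteness of $\operatorname{Ass}_R(\operatorname{H}^{i}_{\mathfrak{a}}(M,X))$ itself, since every prime ideal $\mathfrak{p}$ of $R$ satisfies $\dim(R/\mathfrak{p})\geq 0$, so $A_{\geq 0}=A$ for any $A\subseteq\operatorname{Spec}(R)$.

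There is essentially no obstacle: once the identification of $\operatorname{FD}_{<0}$ with finite modules is invoked, the statement is a verbatim instance of Theorem~\ref{2-7}. The only thing worth double-checking while writing the proof is that the ``resp.'' clauses line up consistently on both sides of the specialization and that the trivial identity $\operatorname{Ass}_R(Z)_{\geq 0}=\operatorname{Ass}_R(Z)$ is used to convert part~(ii) to its unqualified form.
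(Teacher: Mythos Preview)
Your proposal is correct and matches the paper's own proof, which is simply ``Take $n=0$ in Theorem~\ref{2-7}.'' The additional bookkeeping you spell out (that $\operatorname{FD}_{<0}$ is the class of finite modules and that $A_{\geq 0}=A$) is exactly the implicit content of that one-line specialization.
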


\begin{proof}
Take $n= 0$ in Theorem \ref{2-7}.
\end{proof}


\begin{cor}\label{2-9}
Let $R$ be a semi-local ring, let $M$ be a finite $R$-module with $\operatorname{pd}_R(M)< \infty$, and let $X$ be an arbitrary $R$-module such that $\operatorname{H}^{i}_{\mathfrak{a}}(M, X)$ is an $\operatorname{FD}_{< 3}$ $R$-module for all $i< \operatorname{pd}_R(M)+ \dim_R(X)- 1$ $($resp. $i< \dim(R)- 1)$ $($e.g., $\dim(R/\mathfrak{a})\leq 2)$ and $\operatorname{Ext}^{i}_{R}(M/\mathfrak{a}M, X)$ is an $\operatorname{FD}_{< 1}$ $R$-module for all $i\leq \operatorname{pd}_R(M)+ \dim_R(X)- 1$ $($resp. $i\leq \dim(R)- 1)$. Then, for all $i$, $\operatorname{H}^{i}_{\mathfrak{a}}(M, X)$ is an $\mathfrak{a}$-weakly cofinite $R$-module, $\operatorname{Ass}_R(\operatorname{H}^{i}_{\mathfrak{a}}(M, X))$ is a finite set, and $\operatorname{Ext}^{i}_{R}(M/\mathfrak{a}M, X)$ is a weakly Laskerian $R$-module.
\end{cor}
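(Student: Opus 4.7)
The plan is to mirror the derivation of Corollary \ref{2-3} from Theorem \ref{2-1}, now applied to Theorem \ref{2-7} with $n=1$. First I would verify that the hypotheses of Theorem \ref{2-7} are satisfied with $n=1$: the assumption that $\operatorname{H}^{i}_{\mathfrak{a}}(M, X)$ is $\operatorname{FD}_{<3}$ for $i<\operatorname{pd}_R(M)+\dim_R(X)-1$ (resp.\ $i<\dim(R)-1$) is exactly the $n=1$ version of the first hypothesis, and the assumption on $\operatorname{Ext}^{i}_{R}(M/\mathfrak{a}M, X)$ being $\operatorname{FD}_{<1}$ up to degree $\operatorname{pd}_R(M)+\dim_R(X)-1$ (resp.\ $\dim(R)-1$) is the $n=1$ version of the second. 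Theorem \ref{2-7} then delivers three things for all $i$: $\operatorname{H}^{i}_{\mathfrak{a}}(M, X)$ is $(\operatorname{FD}_{<1}, \mathfrak{a})$-cofinite, $\operatorname{Ass}_R(\operatorname{H}^{i}_{\mathfrak{a}}(M, X))_{\geq 1}$ is finite, and $\operatorname{Ext}^{i}_{R}(M/\mathfrak{a}M, X)$ is $\operatorname{FD}_{<1}$.

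Second, I would invoke \cite[Theorem 3.3]{B}, exactly as in the proof of Corollary \ref{2-3}, which asserts that over a semi-local ring every $\operatorname{FD}_{<1}$ $R$-module is weakly Laskerian. Applying this to $\operatorname{Ext}^{i}_{R}(R/\mathfrak{a}, \operatorname{H}^{j}_{\mathfrak{a}}(M, X))$ (which is $\operatorname{FD}_{<1}$ for all $i,j$ by the $(\operatorname{FD}_{<1},\mathfrak{a})$-cofiniteness above) converts $\operatorname{H}^{j}_{\mathfrak{a}}(M, X)$ into an $\mathfrak{a}$-weakly cofinite $R$-module. Applying it to $\operatorname{Ext}^{i}_{R}(M/\mathfrak{a}M, X)$ immediately gives that this module is weakly Laskerian.

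Third, for the finiteness of $\operatorname{Ass}_R(\operatorname{H}^{i}_{\mathfrak{a}}(M, X))$, I would decompose
\[\operatorname{Ass}_R(\operatorname{H}^{i}_{\mathfrak{a}}(M, X))=\operatorname{Ass}_R(\operatorname{H}^{i}_{\mathfrak{a}}(M, X))_{\geq 1}\cup\operatorname{Ass}_R(\operatorname{H}^{i}_{\mathfrak{a}}(M, X))_{=0}.\]
The first piece is finite by Theorem \ref{2-7}(ii) with $n=1$. The second consists of associated primes $\mathfrak{p}$ with $\dim(R/\mathfrak{p})=0$, i.e.\ maximal ideals of $R$, of which there are only finitely many because $R$ is semi-local. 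Hence the full set is finite.

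The argument is essentially a bookkeeping exercise; the only substantive ingredient beyond Theorem \ref{2-7} is the semi-local input \cite[Theorem 3.3]{B}, and checking that the semi-local hypothesis genuinely upgrades $\operatorname{FD}_{<1}$ to weakly Laskerian is the one place one must be careful. Since this exact upgrade has already been used in Corollary \ref{2-3}, no new obstacle is expected.
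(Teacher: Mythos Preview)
Your proposal is correct and matches the paper's own proof, which simply reads ``Consider \cite[Theorem 3.3]{B} and put $n=1$ in Theorem \ref{2-7}.'' You have merely unpacked this terse line: your first two steps are exactly the specialization $n=1$ together with the semi-local upgrade $\operatorname{FD}_{<1}\Rightarrow$ weakly Laskerian from \cite[Theorem 3.3]{B}, and your third step (splitting $\operatorname{Ass}$ into height-$\geq 1$ and maximal primes) is one legitimate way to extract the finiteness of the full associated-prime set, the other being to note that $\operatorname{Hom}_R(R/\mathfrak{a},\operatorname{H}^i_{\mathfrak{a}}(M,X))$ is weakly Laskerian and apply \cite[Exercise 1.2.28]{BH}.
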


\begin{proof}
Consider \cite[Theorem 3.3]{B} and put $n= 1$ in Theorem \ref{2-7}.
\end{proof}


\begin{cor}\label{2-10}
Let $M$ be a finite $R$-module with $\operatorname{pd}_R(M)< \infty$ and let $X$ be an arbitrary $R$-module such that $\operatorname{Ext}^{i}_{R}(M/\mathfrak{a}M, X)$ is an $\operatorname{FD}_{< n}$ $R$-module for all $i\leq \operatorname{pd}_R(M)+ \dim_R(X)- n$ $($resp. $i\leq \dim(R)- n)$. Then, for every integer $i$ and every prime ideal $\mathfrak{p}$ of $\operatorname{Var}(\mathfrak{a})_{\geq n}$, $\operatorname{H}^{i}_{\mathfrak{p}R_\mathfrak{p}}(M_\mathfrak{p}, X_\mathfrak{p})$ is a $\mathfrak{p}R_\mathfrak{p}$-cofinite $R_\mathfrak{p}$-module, $\operatorname{Ass}_{R_\mathfrak{p}}(\operatorname{H}^{i}_{\mathfrak{p}R_\mathfrak{p}}(M_\mathfrak{p}, X_\mathfrak{p}))$ is a finite set, and $\operatorname{Ext}^{i}_{R_\mathfrak{p}}(M_\mathfrak{p}/\mathfrak{p}M_\mathfrak{p}, X_\mathfrak{p})$ is a finite $R_\mathfrak{p}$-module.
\end{cor}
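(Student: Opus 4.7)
The strategy is to apply Corollary \ref{2-8} at the local ring $R_\mathfrak{p}$ with ideal $\mathfrak{p}R_\mathfrak{p}$ and modules $M_\mathfrak{p}, X_\mathfrak{p}$, because its three conclusions coincide exactly with the three assertions to be proved for the prime $\mathfrak{p}$. Two of the three hypotheses of Corollary \ref{2-8} are immediate at $R_\mathfrak{p}$: $\operatorname{pd}_{R_\mathfrak{p}}(M_\mathfrak{p}) \leq \operatorname{pd}_R(M) < \infty$, and since $R_\mathfrak{p}$ is local with maximal ideal $\mathfrak{p}R_\mathfrak{p}$, one has $\dim(R_\mathfrak{p}/\mathfrak{p}R_\mathfrak{p}) = 0 \leq 1$, so the ``$\dim(R/\mathfrak{a})\leq 1$'' clause of Corollary \ref{2-8} automatically supplies the required local cohomology hypothesis.

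All the real work lies in verifying the Ext hypothesis at $R_\mathfrak{p}$: that $\operatorname{Ext}^i_{R_\mathfrak{p}}(M_\mathfrak{p}/\mathfrak{p}R_\mathfrak{p}M_\mathfrak{p}, X_\mathfrak{p})$ is a finite $R_\mathfrak{p}$-module for $i \leq \operatorname{pd}_{R_\mathfrak{p}}(M_\mathfrak{p}) + \dim_{R_\mathfrak{p}}(X_\mathfrak{p})$ (resp.\ $i \leq \dim(R_\mathfrak{p})$). The core lemma driving the localization step is that any $\operatorname{FD}_{<n}$ $R$-module $Y$ localizes to a finite $R_\mathfrak{p}$-module whenever $\dim(R/\mathfrak{p}) \geq n$: if $0 \to Y' \to Y \to Y'' \to 0$ with $Y'$ finite and $\dim_R(Y'')<n$, then any $\mathfrak{q} \in \operatorname{Supp}_R(Y'')$ with $\mathfrak{q}\subseteq\mathfrak{p}$ would force $\dim(R/\mathfrak{q}) \geq \dim(R/\mathfrak{p}) \geq n$, contradicting $\dim_R(Y'')<n$; hence $Y_\mathfrak{p} = Y'_\mathfrak{p}$ is finite. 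Combined with $\operatorname{pd}_{R_\mathfrak{p}}(M_\mathfrak{p}) + \dim_{R_\mathfrak{p}}(X_\mathfrak{p}) \leq \operatorname{pd}_R(M) + \dim_R(X) - n$ and $\dim(R_\mathfrak{p}) \leq \dim(R) - n$, this yields finiteness of $\operatorname{Ext}^i_R(M/\mathfrak{a}M, X)_\mathfrak{p} \cong \operatorname{Ext}^i_{R_\mathfrak{p}}(M_\mathfrak{p}/\mathfrak{a}R_\mathfrak{p}M_\mathfrak{p}, X_\mathfrak{p})$ throughout the target range.

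The main obstacle is that this produces finiteness with denominator $\mathfrak{a}R_\mathfrak{p}$ rather than the required $\mathfrak{p}R_\mathfrak{p}$. I plan to close this gap by first invoking Corollary \ref{2-8} at $R_\mathfrak{p}$ with the smaller ideal $\mathfrak{a}R_\mathfrak{p}$: its local cohomology hypothesis can be extracted from the automatic bound $\operatorname{H}^t_{\mathfrak{a}R_\mathfrak{p}}(M_\mathfrak{p}, X_\mathfrak{p}) \in \operatorname{FD}_{<\operatorname{pd}_{R_\mathfrak{p}}(M_\mathfrak{p}) + \dim_{R_\mathfrak{p}}(X_\mathfrak{p}) - t + 1}$ of the first paragraph of the proof of Theorem \ref{2-7}, together with the localized Ext finiteness just established, yielding $\operatorname{Ext}^i_{R_\mathfrak{p}}(M_\mathfrak{p}/\mathfrak{a}R_\mathfrak{p}M_\mathfrak{p}, X_\mathfrak{p})$ finite for every $i$. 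An induction on a generating set of $\mathfrak{p}R_\mathfrak{p}/\mathfrak{a}R_\mathfrak{p}$, threaded through the long exact $\operatorname{Ext}$ sequence of $0 \to \mathfrak{p}R_\mathfrak{p}M_\mathfrak{p}/\mathfrak{a}R_\mathfrak{p}M_\mathfrak{p} \to M_\mathfrak{p}/\mathfrak{a}R_\mathfrak{p}M_\mathfrak{p} \to M_\mathfrak{p}/\mathfrak{p}R_\mathfrak{p}M_\mathfrak{p} \to 0$, then transfers finiteness to the Ext's against $M_\mathfrak{p}/\mathfrak{p}R_\mathfrak{p}M_\mathfrak{p}$, enabling the final application of Corollary \ref{2-8} with ideal $\mathfrak{p}R_\mathfrak{p}$ to deliver the three claimed conclusions.
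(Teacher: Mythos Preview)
Your overall strategy---reduce to Corollary~\ref{2-8} at $R_\mathfrak{p}$ with ideal $\mathfrak{p}R_\mathfrak{p}$---matches the paper's, and your localization lemma (an $\operatorname{FD}_{<n}$ module localizes to a finite module at any $\mathfrak{p}$ with $\dim(R/\mathfrak{p})\geq n$) together with the inequalities $\operatorname{pd}_{R_\mathfrak{p}}(M_\mathfrak{p})+\dim_{R_\mathfrak{p}}(X_\mathfrak{p})\leq\operatorname{pd}_R(M)+\dim_R(X)-n$ and $\dim(R_\mathfrak{p})\leq\dim(R)-n$ are correct. The problem is the detour you insert to bridge the gap from $\mathfrak{a}R_\mathfrak{p}$ to $\mathfrak{p}R_\mathfrak{p}$.

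Your proposed invocation of Corollary~\ref{2-8} at $R_\mathfrak{p}$ with the ideal $\mathfrak{a}R_\mathfrak{p}$ is not justified. The ``automatic bound'' from the first paragraph of Theorem~\ref{2-7} only gives $\operatorname{H}^{t}_{\mathfrak{a}R_\mathfrak{p}}(M_\mathfrak{p},X_\mathfrak{p})\in\operatorname{FD}_{<\,\operatorname{pd}_{R_\mathfrak{p}}(M_\mathfrak{p})+\dim_{R_\mathfrak{p}}(X_\mathfrak{p})-t+1}$; this lies in $\operatorname{FD}_{<2}$ only when $t\geq\operatorname{pd}_{R_\mathfrak{p}}(M_\mathfrak{p})+\dim_{R_\mathfrak{p}}(X_\mathfrak{p})-1$, so it does \emph{not} yield the hypothesis of Corollary~\ref{2-8} for all $t$ in the required range, and there is no reason that $\dim(R_\mathfrak{p}/\mathfrak{a}R_\mathfrak{p})\leq 1$. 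Hence you cannot conclude that $\operatorname{Ext}^{i}_{R_\mathfrak{p}}(M_\mathfrak{p}/\mathfrak{a}M_\mathfrak{p},X_\mathfrak{p})$ is finite for \emph{every} $i$ before doing the ideal change, and your final induction step (as written) presupposes exactly that.

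The fix, and what the paper does, is to swap the order: change the ideal from $\mathfrak{a}$ to $\mathfrak{p}$ \emph{first}, staying in the bounded range, and only then localize and apply Corollary~\ref{2-8}. The paper accomplishes the ideal change in one stroke by citing \cite[Lemma~2.2]{VP2}, which says precisely that the $\operatorname{FD}_{<n}$ property of $\operatorname{Ext}^{i}_{R}(M/\mathfrak{a}M,X)$ for $i\leq k$ transfers to $\operatorname{Ext}^{i}_{R}(M/\mathfrak{p}M,X)$ for $i\leq k$ whenever $\mathfrak{a}\subseteq\mathfrak{p}$. Your ``induction on a generating set'' is morally the same device, but note that the single short exact sequence you wrote down is not enough by itself: controlling $\operatorname{Ext}^{i}(M_\mathfrak{p}/\mathfrak{p}M_\mathfrak{p},X_\mathfrak{p})$ from that sequence requires controlling $\operatorname{Ext}^{i-1}(\mathfrak{p}M_\mathfrak{p}/\mathfrak{a}M_\mathfrak{p},X_\mathfrak{p})$, and that module is not of the form $M_\mathfrak{p}/\mathfrak{c}M_\mathfrak{p}$. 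A correct argument needs a simultaneous induction on $i$ over the whole class of finite subquotients (as in the standard proof of the lemma you are reinventing), and it only ever uses the Ext hypothesis in the bounded range---so the detour through all $i$ was never needed.
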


\begin{proof}
Assume that $\mathfrak{p}$ is a prime ideal of $\operatorname{Var}(\mathfrak{a})_{\geq n}$. Then, from \cite[Lemma 2.2]{VP2}, $\operatorname{Ext}^{i}_{R}(M/\mathfrak{p}M, X)$ is an $\operatorname{FD}_{< n}$ $R$-module for all $i\leq \operatorname{pd}_R(M)+ \dim_R(X)- n$ $($resp. $i\leq \dim(R)- n)$. Thus $\operatorname{Ext}^{i}_{R_\mathfrak{p}}(M_\mathfrak{p}/\mathfrak{p}M_\mathfrak{p}, X_\mathfrak{p})$ is a finite $R_\mathfrak{p}$-module for all $i\leq \operatorname{pd}_{R_\mathfrak{p}}(M_\mathfrak{p})+ \dim_{R_\mathfrak{p}}(X_\mathfrak{p})$ $($resp. $i\leq \dim(R_\mathfrak{p}))$. Hence, for all $i$, $\operatorname{H}^{i}_{\mathfrak{p}R_\mathfrak{p}}(M_\mathfrak{p}, X_\mathfrak{p})$ is a $\mathfrak{p}R_\mathfrak{p}$-cofinite $R_\mathfrak{p}$-module, $\operatorname{Ass}_{R_\mathfrak{p}}(\operatorname{H}^{i}_{\mathfrak{p}R_\mathfrak{p}}(M_\mathfrak{p}, X_\mathfrak{p}))$ is a finite set, and $\operatorname{Ext}^{i}_{R_\mathfrak{p}}(M_\mathfrak{p}/\mathfrak{p}M_\mathfrak{p}, X_\mathfrak{p})$ is a finite $R_\mathfrak{p}$-module by Corollary \ref{2-8}.
\end{proof}


\begin{cor}\label{2-11}
Let $M$ be a finite $R$-module with $\operatorname{pd}_R(M)< \infty$ and let $X$ be an arbitrary $R$-module such that $\operatorname{Ext}^{i}_{R}(M/\mathfrak{a}M, X)$ is a finite $R$-module for all $i\leq \operatorname{pd}_R(M)+ \dim_R(X)$ $($resp. $i\leq \dim(R))$. Then, for every integer $i$ and every prime ideal $\mathfrak{p}$ of $\operatorname{Var}(\mathfrak{a})$, $\operatorname{H}^{i}_{\mathfrak{p}R_\mathfrak{p}}(M_\mathfrak{p}, X_\mathfrak{p})$ is a $\mathfrak{p}R_\mathfrak{p}$-cofinite $R_\mathfrak{p}$-module, $\operatorname{Ass}_{R_\mathfrak{p}}(\operatorname{H}^{i}_{\mathfrak{p}R_\mathfrak{p}}(M_\mathfrak{p}, X_\mathfrak{p}))$ is a finite set, and $\operatorname{Ext}^{i}_{R_\mathfrak{p}}(M_\mathfrak{p}/\mathfrak{p}M_\mathfrak{p}, X_\mathfrak{p})$ is a finite $R_\mathfrak{p}$-module.
\end{cor}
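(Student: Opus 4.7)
The plan is to deduce Corollary \ref{2-11} directly from Corollary \ref{2-10} by specializing the integer $n$ to zero. The key observation is that the class $\operatorname{FD}_{<0}$ coincides with the class of all finite $R$-modules, as already noted in the introduction just after the definition of $(\operatorname{FD}_{<n},\mathfrak{a})$-cofinite modules. Consequently the hypothesis that $\operatorname{Ext}^{i}_{R}(M/\mathfrak{a}M, X)$ is a finite $R$-module for all $i\leq \operatorname{pd}_R(M)+ \dim_R(X)$ (resp. $i\leq \dim(R)$) is literally the hypothesis of Corollary \ref{2-10} in the case $n = 0$, because the inequalities $\operatorname{pd}_R(M)+\dim_R(X)-n$ and $\dim(R)-n$ both reduce to the stated bounds when $n=0$.

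Next, I would observe that every prime ideal $\mathfrak{p}\in\operatorname{Var}(\mathfrak{a})$ trivially satisfies $\dim(R/\mathfrak{p})\geq 0$, so $\operatorname{Var}(\mathfrak{a})_{\geq 0}=\operatorname{Var}(\mathfrak{a})$ and the set of primes quantified over in the two statements is identical. Having made these identifications, the conclusion is immediate: for each $\mathfrak{p}\in\operatorname{Var}(\mathfrak{a})$ and each integer $i$, Corollary \ref{2-10} with $n=0$ directly yields that $\operatorname{H}^{i}_{\mathfrak{p}R_\mathfrak{p}}(M_\mathfrak{p},X_\mathfrak{p})$ is a $\mathfrak{p}R_\mathfrak{p}$-cofinite $R_\mathfrak{p}$-module, that $\operatorname{Ass}_{R_\mathfrak{p}}(\operatorname{H}^{i}_{\mathfrak{p}R_\mathfrak{p}}(M_\mathfrak{p},X_\mathfrak{p}))$ is finite, and that $\operatorname{Ext}^{i}_{R_\mathfrak{p}}(M_\mathfrak{p}/\mathfrak{p}M_\mathfrak{p},X_\mathfrak{p})$ is a finite $R_\mathfrak{p}$-module.

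I do not anticipate any genuine obstacle, since the corollary is by design the $n=0$ shadow of its predecessor. The only point to verify carefully is that the two bounds in the ``resp.'' clause transport correctly under the substitution $n=0$, and that the underlying machinery invoked inside Corollary \ref{2-10} (namely \cite[Lemma 2.2]{VP2}, which preserves the $\operatorname{FD}_{<n}$ hypothesis when passing from $M/\mathfrak{a}M$ to $M/\mathfrak{p}M$, together with the localization step that reduces to a local ring) still applies in this specialization. Both verifications are routine, so the proof reduces in essence to a one-line invocation: take $n=0$ in Corollary \ref{2-10}.
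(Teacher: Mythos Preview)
Your proposal is correct and matches the paper's own proof exactly: the paper simply writes ``Take $n=0$ in Corollary \ref{2-10}.'' Your additional remarks about $\operatorname{FD}_{<0}$ coinciding with finite modules and $\operatorname{Var}(\mathfrak{a})_{\geq 0}=\operatorname{Var}(\mathfrak{a})$ are the right justifications for why this specialization works.
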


\begin{proof}
Take $n= 0$ in Corollary \ref{2-10}.
\end{proof}


We have the following corollaries for the ordinary local cohomology modules by putting $M=R$ in Theorem \ref{2-7} and Corollaries \ref{2-8}--\ref{2-11}.

\begin{cor}\label{2-12}
Let $X$ be an arbitrary $R$-module such that $\operatorname{H}^{i}_{\mathfrak{a}}(X)$ is an $\operatorname{FD}_{< n+ 2}$ $R$-module for all $i< \dim_R(X)- n$ $($e.g., $\dim(R/\mathfrak{a})\leq n+ 1)$ and $\operatorname{Ext}^{i}_{R}(R/\mathfrak{a}, X)$ is an $\operatorname{FD}_{< n}$ $R$-module for all $i\leq \dim_R(X)- n$. Then the following statements hold true:
\begin{itemize}
\item[\emph{(i)}] $\operatorname{H}^{i}_{\mathfrak{a}}(X)$ is an $(\operatorname{FD}_{< n}, \mathfrak{a})$-cofinite $R$-module for all $i$;
\item[\emph{(ii)}] $\operatorname{Ass}_R(\operatorname{H}^{i}_{\mathfrak{a}}(X))_{\geq n}$ is a finite set for all $i$;
\item[\emph{(iii)}] $\operatorname{Ext}^{i}_{R}(R/\mathfrak{a}, X)$ is an $\operatorname{FD}_{< n}$ $R$-module for all $i$.
\end{itemize}
\end{cor}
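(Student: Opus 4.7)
The plan is to derive Corollary \ref{2-12} by specializing Theorem \ref{2-7} to the case $M = R$. The point is that $R$ is a finite $R$-module with $\operatorname{pd}_R(R) = 0$, so the numerical bounds in Theorem \ref{2-7} simplify considerably. First I would observe that $\operatorname{H}^{i}_{\mathfrak{a}}(R, X) \cong \operatorname{H}^{i}_{\mathfrak{a}}(X)$ (this is the remark of Herzog recalled in the introduction) and that $R/\mathfrak{a}R = R/\mathfrak{a}$, so $\operatorname{Ext}^{i}_{R}(R/\mathfrak{a}R, X) = \operatorname{Ext}^{i}_{R}(R/\mathfrak{a}, X)$. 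Substituting, the bounds $\operatorname{pd}_R(M) + \dim_R(X) - n$ appearing in Theorem \ref{2-7} collapse to $\dim_R(X) - n$, which are precisely the bounds appearing in the hypothesis of Corollary \ref{2-12}.

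Next I would verify the parenthetical remark that $\dim(R/\mathfrak{a}) \leq n+1$ is a sufficient condition for the local cohomology hypothesis. Since $\operatorname{H}^{i}_{\mathfrak{a}}(X)$ is $\mathfrak{a}$-torsion, its support lies in $\operatorname{Var}(\mathfrak{a})$, whence $\dim_R(\operatorname{H}^{i}_{\mathfrak{a}}(X)) \leq \dim(R/\mathfrak{a}) \leq n+1 < n+2$; taking the finite submodule $X' = 0$ in the definition of $\operatorname{FD}_{<n+2}$ shows $\operatorname{H}^{i}_{\mathfrak{a}}(X) \in \operatorname{FD}_{<n+2}$ for all $i$, in particular for $i < \dim_R(X) - n$. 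With that in place, all three conclusions (i), (ii), (iii) follow directly from the corresponding conclusions of Theorem \ref{2-7}, merely by writing $\operatorname{H}^{i}_{\mathfrak{a}}(X)$ for $\operatorname{H}^{i}_{\mathfrak{a}}(R, X)$ and $\operatorname{Ext}^{i}_{R}(R/\mathfrak{a}, X)$ for $\operatorname{Ext}^{i}_{R}(R/\mathfrak{a}R, X)$.

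There is no substantive obstacle here; the argument is a one-line invocation of Theorem \ref{2-7} with $M = R$. The only minor point worth spelling out, if one wishes, is the $\dim(R/\mathfrak{a}) \leq n+1$ reduction above, and even that is a standard support computation.
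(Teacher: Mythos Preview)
Your proposal is correct and matches the paper's approach exactly: the paper derives Corollary~\ref{2-12} simply by putting $M=R$ in Theorem~\ref{2-7}, which is precisely what you do. Your added verification of the parenthetical $\dim(R/\mathfrak{a})\leq n+1$ condition is a helpful elaboration but not something the paper spells out.
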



\begin{cor}\label{2-13}
Let $X$ be an arbitrary $R$-module such that $\operatorname{H}^{i}_{\mathfrak{a}}(X)$ is an $\operatorname{FD}_{< 2}$ $R$-module for all $i< \dim_R(X)$ $($e.g., $\dim(R/\mathfrak{a})\leq 1)$ and $\operatorname{Ext}^{i}_{R}(R/\mathfrak{a}, X)$ is a finite $R$-module for all $i\leq \dim_R(X)$. Then, for all $i$, $\operatorname{H}^{i}_{\mathfrak{a}}(X)$ is an $\mathfrak{a}$-cofinite $R$-module, $\operatorname{Ass}_R(\operatorname{H}^{i}_{\mathfrak{a}}(X))$ is a finite set, and $\operatorname{Ext}^{i}_{R}(R/\mathfrak{a}, X)$ is a finite $R$-module.
\end{cor}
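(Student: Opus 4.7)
The plan is to obtain this corollary as the $n=0$ specialization of Corollary \ref{2-12}. First I would record the dictionary from the introduction: the class $\operatorname{FD}_{<0}$ coincides with the class of finite $R$-modules, and consequently the class of $(\operatorname{FD}_{<0},\mathfrak{a})$-cofinite $R$-modules coincides with the class of $\mathfrak{a}$-cofinite $R$-modules. Moreover, for every prime ideal $\mathfrak{p}$ of $R$ one has $\dim(R/\mathfrak{p})\geq 0$, so for any subset $A\subseteq\operatorname{Spec}(R)$ the set $A_{\geq 0}$ is just $A$. In particular $\operatorname{Ass}_R(\operatorname{H}^{i}_{\mathfrak{a}}(X))_{\geq 0}=\operatorname{Ass}_R(\operatorname{H}^{i}_{\mathfrak{a}}(X))$.

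Under these identifications, the hypotheses of Corollary \ref{2-12} with $n=0$ read exactly: $\operatorname{H}^{i}_{\mathfrak{a}}(X)$ is an $\operatorname{FD}_{<2}$ $R$-module for all $i<\dim_R(X)$, and $\operatorname{Ext}^{i}_{R}(R/\mathfrak{a},X)$ is a finite $R$-module for all $i\leq\dim_R(X)$, which are precisely the assumptions of the present statement. The conclusions (i), (ii), (iii) of Corollary \ref{2-12} then translate into: $\operatorname{H}^{i}_{\mathfrak{a}}(X)$ is $\mathfrak{a}$-cofinite for all $i$, $\operatorname{Ass}_R(\operatorname{H}^{i}_{\mathfrak{a}}(X))$ is a finite set for all $i$, and $\operatorname{Ext}^{i}_{R}(R/\mathfrak{a},X)$ is a finite $R$-module for all $i$, which is the full conclusion of Corollary \ref{2-13}.

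Since the result is a direct specialization of an already established corollary, I do not expect any real obstacle; the only point to verify is the translation between the $n=0$ instance of Corollary \ref{2-12} and the formulation of Corollary \ref{2-13}, and this is immediate from the definitions recorded in Section \ref{1}. The one-line proof will therefore simply read: take $n=0$ in Corollary \ref{2-12}.
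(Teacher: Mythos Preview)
Your proposal is correct. The paper obtains Corollary~\ref{2-13} by putting $M=R$ in Corollary~\ref{2-8}, while you obtain it by taking $n=0$ in Corollary~\ref{2-12}; since Corollary~\ref{2-8} is Theorem~\ref{2-7} with $n=0$ and Corollary~\ref{2-12} is Theorem~\ref{2-7} with $M=R$, the two routes are the same specialization of Theorem~\ref{2-7} performed in opposite orders, and your translation of the $n=0$ case is accurate.
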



\begin{cor}\label{2-14}
Let $R$ be a semi-local ring and let $X$ be an arbitrary $R$-module such that $\operatorname{H}^{i}_{\mathfrak{a}}(X)$ is an $\operatorname{FD}_{< 3}$ $R$-module for all $i< \dim_R(X)- 1$ $($e.g., $\dim(R/\mathfrak{a})\leq 2)$ and $\operatorname{Ext}^{i}_{R}(R/\mathfrak{a}, X)$ is an $\operatorname{FD}_{< 1}$ $R$-module for all $i\leq \dim_R(X)- 1$. Then, for all $i$, $\operatorname{H}^{i}_{\mathfrak{a}}(X)$ is an $\mathfrak{a}$-weakly cofinite $R$-module, $\operatorname{Ass}_R(\operatorname{H}^{i}_{\mathfrak{a}}(X))$ is a finite set, and $\operatorname{Ext}^{i}_{R}(R/\mathfrak{a}, X)$ is a weakly Laskerian $R$-module.
\end{cor}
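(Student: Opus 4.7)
The plan is to derive this corollary as a direct specialization of Corollary \ref{2-9} to the case $M = R$. First, I would observe that with this choice one has $\operatorname{pd}_R(M)= 0$, so that $\operatorname{pd}_R(M)+ \dim_R(X)= \dim_R(X)$, together with the natural identifications $\operatorname{H}^{i}_{\mathfrak{a}}(R, X)\cong \operatorname{H}^{i}_{\mathfrak{a}}(X)$ and $M/\mathfrak{a}M\cong R/\mathfrak{a}$. These reductions translate the hypotheses of Corollary \ref{2-9} verbatim into the hypotheses of the present statement, and they likewise translate the three conclusions of Corollary \ref{2-9} into the three conclusions claimed here. This parallels how Corollaries \ref{2-5} and \ref{2-6} are obtained from Corollaries \ref{2-2} and \ref{2-3}.

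Equivalently, one could argue directly from Theorem \ref{2-7} with $M = R$ and $n = 1$, which delivers $(\operatorname{FD}_{< 1}, \mathfrak{a})$-cofiniteness of each $\operatorname{H}^{i}_{\mathfrak{a}}(X)$, finiteness of $\operatorname{Ass}_R(\operatorname{H}^{i}_{\mathfrak{a}}(X))_{\geq 1}$, and the $\operatorname{FD}_{< 1}$ property for each $\operatorname{Ext}^{i}_{R}(R/\mathfrak{a}, X)$. To bridge from $\operatorname{FD}_{< 1}$ to the conclusions stated here, I would invoke \cite[Theorem 3.3]{B}, which under the semi-local hypothesis identifies $\operatorname{FD}_{< 1}$ modules with weakly Laskerian modules. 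This upgrade converts $(\operatorname{FD}_{< 1}, \mathfrak{a})$-cofiniteness into $\mathfrak{a}$-weak cofiniteness for each $\operatorname{H}^{i}_{\mathfrak{a}}(X)$, and converts the $\operatorname{FD}_{< 1}$ property into the weakly Laskerian property for each $\operatorname{Ext}^{i}_{R}(R/\mathfrak{a}, X)$. Finiteness of the full set $\operatorname{Ass}_R(\operatorname{H}^{i}_{\mathfrak{a}}(X))$, rather than only its $\geq 1$ part, is then automatic, since in a semi-local ring only the finitely many maximal ideals satisfy $\dim(R/\mathfrak{p})= 0$.

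The main obstacle is essentially nonexistent: the whole argument is a routine specialization of a general result already in hand. The only bookkeeping to be careful about is that the numerical bound $\operatorname{pd}_R(M)+ \dim_R(X)- 1$ correctly collapses to $\dim_R(X)- 1$ under $\operatorname{pd}_R(M)= 0$, and that the semi-local hypothesis needed for \cite[Theorem 3.3]{B} is carried through both the application of Corollary \ref{2-9} (or Theorem \ref{2-7}) and the passage from $\operatorname{FD}_{< 1}$ to weakly Laskerian.
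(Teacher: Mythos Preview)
Your proposal is correct and matches the paper's approach exactly: the paper states just before Corollary~\ref{2-12} that Corollaries~\ref{2-12}--\ref{2-16} are obtained by putting $M=R$ in Theorem~\ref{2-7} and Corollaries~\ref{2-8}--\ref{2-11}, so Corollary~\ref{2-14} is precisely Corollary~\ref{2-9} with $M=R$. Your alternative route via Theorem~\ref{2-7} with $n=1$ and \cite[Theorem~3.3]{B} is also fine and is simply unpacking how Corollary~\ref{2-9} itself was proved.
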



For the last parts of the next two corollaries (see \cite[Corollaries 2.9  and 2.10]{VKhSh1}), we use \cite[Theorem 2.1]{Mel1} which shows that, for every prime ideal $\mathfrak{p}$ of $\operatorname{Spec}(R)$, the $i$th Bass number of $X$ with respect to $\mathfrak{p}$ is finite for all $i$ if and only if the $i$th Betti number of $X$ with respect to $\mathfrak{p}$ is finite for all $i$.

\begin{cor}\label{2-15}
Let $X$ be an arbitrary $R$-module such that $\operatorname{Ext}^{i}_{R}(R/\mathfrak{a}, X)$ is an $\operatorname{FD}_{< n}$ $R$-module for all $i\leq \dim_R(X)- n$. Then, for every integer $i$ and every prime ideal $\mathfrak{p}$ of $\operatorname{Var}(\mathfrak{a})_{\geq n}$, $\operatorname{H}^{i}_{\mathfrak{p}R_\mathfrak{p}}(X_\mathfrak{p})$ is a $\mathfrak{p}R_\mathfrak{p}$-cofinite $R_\mathfrak{p}$-module, $\operatorname{Ass}_{R_\mathfrak{p}}(\operatorname{H}^{i}_{\mathfrak{p}R_\mathfrak{p}}(X_\mathfrak{p}))$ is a finite set, and the $i$th Bass number and the $i$th Betti number of $X$ with respect to $\mathfrak{p}$ are finite.
\end{cor}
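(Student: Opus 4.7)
The plan is to deduce Corollary \ref{2-15} by a direct specialization of Corollary \ref{2-10} to the case $M = R$, and then invoke the Bass--Betti equivalence cited in the paragraph preceding the statement. When $M = R$ we have $\operatorname{pd}_R(M) = 0$, so the hypothesis of Corollary \ref{2-10} that $\operatorname{Ext}^{i}_{R}(M/\mathfrak{a}M, X)$ be $\operatorname{FD}_{<n}$ for all $i \leq \operatorname{pd}_R(M) + \dim_R(X) - n$ collapses to precisely the present hypothesis, namely that $\operatorname{Ext}^{i}_{R}(R/\mathfrak{a}, X)$ is $\operatorname{FD}_{<n}$ for all $i \leq \dim_R(X) - n$.

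Applying Corollary \ref{2-10} with $M = R$ then gives, for every $\mathfrak{p} \in \operatorname{Var}(\mathfrak{a})_{\geq n}$ and every integer $i$, that $\operatorname{H}^{i}_{\mathfrak{p}R_\mathfrak{p}}(X_\mathfrak{p})$ is $\mathfrak{p}R_\mathfrak{p}$-cofinite, that $\operatorname{Ass}_{R_\mathfrak{p}}(\operatorname{H}^{i}_{\mathfrak{p}R_\mathfrak{p}}(X_\mathfrak{p}))$ is finite, and that $\operatorname{Ext}^{i}_{R_\mathfrak{p}}(R_\mathfrak{p}/\mathfrak{p}R_\mathfrak{p}, X_\mathfrak{p})$ is a finite $R_\mathfrak{p}$-module. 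This last assertion is the finiteness of the $i$th Bass number $\mu^{i}(\mathfrak{p}, X)$: since the $R_\mathfrak{p}$-module $\operatorname{Ext}^{i}_{R_\mathfrak{p}}(R_\mathfrak{p}/\mathfrak{p}R_\mathfrak{p}, X_\mathfrak{p})$ is annihilated by $\mathfrak{p}R_\mathfrak{p}$, its finite generation over $R_\mathfrak{p}$ is equivalent to its finite dimension over the residue field $k(\mathfrak{p})$.

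Having secured the finiteness of all Bass numbers at $\mathfrak{p}$, I would then quote \cite[Theorem 2.1]{Mel1}, which is the equivalence (for a fixed prime) between finiteness of all Bass numbers and finiteness of all Betti numbers; this is the citation the authors themselves flagged in the transitional paragraph just before stating Corollary \ref{2-15}. This yields the finiteness of the $i$th Betti numbers $\beta_i(\mathfrak{p}, X)$ for all $i$ and completes the proof.

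The argument is essentially bookkeeping: there is no real obstacle. The only point that requires a bit of care is the translation between the finite generation of $\operatorname{Ext}^{i}_{R_\mathfrak{p}}(k(\mathfrak{p}), X_\mathfrak{p})$ as an $R_\mathfrak{p}$-module coming out of Corollary \ref{2-10} and the finiteness of Bass numbers in the classical sense, together with verifying that the $M = R$ specialization of Corollary \ref{2-10} introduces no hidden hypothesis and that $\operatorname{Var}(\mathfrak{a})_{\geq n}$ carries the same meaning in both statements.
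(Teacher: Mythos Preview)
Your proposal is correct and follows the paper's approach exactly: the paper states in the paragraph preceding Corollaries \ref{2-15} and \ref{2-16} that they arise by putting $M=R$ in Corollaries \ref{2-8}--\ref{2-11} (here, Corollary \ref{2-10}) together with \cite[Theorem 2.1]{Mel1} for the Bass--Betti equivalence. Your added remark translating finite generation of $\operatorname{Ext}^{i}_{R_\mathfrak{p}}(k(\mathfrak{p}), X_\mathfrak{p})$ into finiteness of the Bass number is a helpful clarification the paper leaves implicit.
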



\begin{cor}\label{2-16}
Let $X$ be an arbitrary $R$-module such that $\operatorname{Ext}^{i}_{R}(R/\mathfrak{a}, X)$ is a finite $R$-module for all $i\leq \dim_R(X)$. Then, for every integer $i$ and every prime ideal $\mathfrak{p}$ of $\operatorname{Var}(\mathfrak{a})$, $\operatorname{H}^{i}_{\mathfrak{p}R_\mathfrak{p}}(X_\mathfrak{p})$ is a $\mathfrak{p}R_\mathfrak{p}$-cofinite $R_\mathfrak{p}$-module, $\operatorname{Ass}_{R_\mathfrak{p}}(\operatorname{H}^{i}_{\mathfrak{p}R_\mathfrak{p}}(X_\mathfrak{p}))$ is a finite set, and the $i$th Bass number and the $i$th Betti number of $X$ with respect to $\mathfrak{p}$ are finite.
\end{cor}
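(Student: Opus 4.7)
The plan is to obtain Corollary \ref{2-16} as the $n=0$ specialization of Corollary \ref{2-15}. First I would observe that, under the conventions fixed in Section \ref{1}, $\operatorname{FD}_{<0}$ is precisely the class of finite $R$-modules; consequently the hypothesis of Corollary \ref{2-15} at $n=0$ — namely that $\operatorname{Ext}^{i}_{R}(R/\mathfrak{a},X)$ is an $\operatorname{FD}_{<0}$ $R$-module for all $i\leq \dim_R(X)-0$ — is literally the finiteness hypothesis of the present statement.

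Second, I would note that $\operatorname{Var}(\mathfrak{a})_{\geq 0}=\operatorname{Var}(\mathfrak{a})$, since every prime ideal $\mathfrak{p}$ of $R$ satisfies $\dim(R/\mathfrak{p})\geq 0$. Thus the range of the outer quantifier \emph{``for every prime ideal $\mathfrak{p}$ of $\operatorname{Var}(\mathfrak{a})_{\geq n}$''} degenerates at $n=0$ to \emph{``for every prime ideal $\mathfrak{p}$ of $\operatorname{Var}(\mathfrak{a})$''}, matching the target quantifier.

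With these two identifications the three conclusions of Corollary \ref{2-15} transfer verbatim to Corollary \ref{2-16}: the $\mathfrak{p}R_\mathfrak{p}$-cofiniteness of $\operatorname{H}^{i}_{\mathfrak{p}R_\mathfrak{p}}(X_\mathfrak{p})$, the finiteness of $\operatorname{Ass}_{R_\mathfrak{p}}(\operatorname{H}^{i}_{\mathfrak{p}R_\mathfrak{p}}(X_\mathfrak{p}))$, and the finiteness of the $i$th Bass and Betti numbers of $X$ with respect to $\mathfrak{p}$ — where the Bass--Betti equivalence is supplied, exactly as in Corollary \ref{2-15}, by \cite[Theorem 2.1]{Mel1}. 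Since the argument is a pure specialization of an already proved statement, there is no genuine obstacle; Corollary \ref{2-16} is recorded separately simply so that the ``all Exts finite'' version is directly citable in applications.
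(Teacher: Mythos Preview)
Your proof is correct. The paper itself obtains Corollary~\ref{2-16} by setting $M=R$ in Corollary~\ref{2-11} (which in turn is Corollary~\ref{2-10} at $n=0$), whereas you set $n=0$ in Corollary~\ref{2-15} (which is Corollary~\ref{2-10} at $M=R$); these are just the two orders of performing the same pair of specializations, and the invocation of \cite[Theorem~2.1]{Mel1} for the Bass--Betti part is identical.
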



Assume that $\mathfrak{a}$ is an ideal of $R$ with $\dim(R/\mathfrak{a})\leq 2$. If $R$ is a local ring and $N$ is a finite $R$-module, in \cite[Theorem 3.7]{BNS} (resp. \cite[Theorem 3.8]{BNS}), it is shown that $\operatorname{H}^j_{\mathfrak{a}}(N)$ is an $\mathfrak{a}$-cofinite $R$-module for all $j< t$ (resp. for all $j$) whenever $\operatorname{Hom}_{R}(R/\mathfrak{a}, \operatorname{H}^j_{\mathfrak{a}}(N))$ (resp. $\operatorname{H}^{2j}_{\mathfrak{a}}(N)$ or $\operatorname{H}^{2j+ 1}_{\mathfrak{a}}(N)$) is a finite (resp. an $\mathfrak{a}$-cofinite) $R$-module for all $j\leq t$ (resp. for all $j$). In \cite[Corollaries 2.12 and 2.13]{VKhSh1}, we improved \cite[Theorems 3.7 and 3.8]{BNS} to the rings which are not necessarily local and to the modules which are not necessarily finite. As generalizations of \cite[Corollaries 2.12 and 2.13]{VKhSh1} to the generalized local cohomology modules, we have the following theorems which are related to Question \ref{1-5}.

\begin{thm}\label{2-17}
Let $\mathfrak{a}$ be an ideal of $R$ with $\dim(R/\mathfrak{a})\leq 2$, let $M$ be a finite $R$-module, let $X$ be an arbitrary $R$-module, and let $t$ be a non-negative integer such that $\operatorname{Ext}^{i}_{R}(M/\mathfrak{a}M, X)$ is a finite $R$-module for all $i\leq t+ 1$ and $\operatorname{Min}_R(\operatorname{Ext}^{i}_{R}(R/\mathfrak{a}, \operatorname{H}^j_{\mathfrak{a}}(M, X))/Y_{ij})_{= 0}$ is a finite set for all $i> 2$, for all $j< t$, and for all finite $R$-submodules $Y_{ij}$ of $\operatorname{Ext}^{i}_{R}(R/\mathfrak{a}, \operatorname{H}^j_{\mathfrak{a}}(M, X))$ $($e.g., $\operatorname{Supp}_R(M)\cap \operatorname{Supp}_R(X)\cap \operatorname{Var}(\mathfrak{a})\cap\operatorname{Max}(R)$ is a finite set$)$. Then the following statements are equivalent:
\begin{itemize}
\item[\emph{(i)}] $\operatorname{Hom}_{R}(R/\mathfrak{a}, \operatorname{H}^j_{\mathfrak{a}}(M, X))$ is a finite $R$-module for all $j\leq t$;
\item[\emph{(ii)}] $\operatorname{H}^j_{\mathfrak{a}}(M, X)$ is an $\mathfrak{a}$-cofinite $R$-module for all $j< t$.
\end{itemize}
\end{thm}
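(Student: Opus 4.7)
The plan is to establish the equivalence by induction on $t$, adapting to the generalized setting the strategy used for ordinary local cohomology in \cite[Corollary 2.12]{VKhSh1} (which in turn develops \cite[Theorem 3.7]{BNS}). The base case $t = 0$ is immediate: statement (ii) is vacuous, and via the canonical identification $\operatorname{Hom}_R(R/\mathfrak{a}, \operatorname{H}^0_\mathfrak{a}(M, X)) \cong \operatorname{Hom}_R(M/\mathfrak{a}M, X)$, the hypothesis on $\operatorname{Ext}^0_R(M/\mathfrak{a}M, X)$ already delivers (i).

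For the inductive step, I would first observe that the hypotheses for $t$ imply those for $t - 1$, so the inductive hypothesis is available. The direction (ii)$\Rightarrow$(i) is the easier one: $\mathfrak{a}$-cofiniteness of $\operatorname{H}^j_\mathfrak{a}(M, X)$ for $j < t$ forces finiteness of $\operatorname{Hom}_R(R/\mathfrak{a}, \operatorname{H}^j_\mathfrak{a}(M, X))$ at those indices for free, leaving only $j = t$. For this I would invoke the Grothendieck-type spectral sequence
\[E_2^{p,q} = \operatorname{Ext}^p_R(R/\mathfrak{a}, \operatorname{H}^q_\mathfrak{a}(M, X)) \Rightarrow \operatorname{Ext}^{p+q}_R(M/\mathfrak{a}M, X),\]
combined with finiteness of $\operatorname{Ext}^t_R(M/\mathfrak{a}M, X)$ and the $\mathfrak{a}$-cofiniteness of the lower $\operatorname{H}^q_\mathfrak{a}(M, X)$, to pin down $E_\infty^{0, t}$ as a subquotient of a finite module.

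The main direction (i)$\Rightarrow$(ii) then uses the induction hypothesis to obtain $\mathfrak{a}$-cofiniteness of $\operatorname{H}^j_\mathfrak{a}(M, X)$ for $j < t - 1$, reducing the problem to $\mathfrak{a}$-cofiniteness of $\operatorname{H}^{t - 1}_\mathfrak{a}(M, X)$. Running through the same spectral sequence, with the lower $\operatorname{H}^q_\mathfrak{a}(M, X)$ cofinite and the targets $\operatorname{Ext}^i_R(M/\mathfrak{a}M, X)$ finite for $i \leq t + 1$, yields that $\operatorname{Ext}^i_R(R/\mathfrak{a}, \operatorname{H}^{t-1}_\mathfrak{a}(M, X))$ is finite for $i = 0, 1, 2$. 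Since $\dim(R/\mathfrak{a}) \leq 2$, a Melkersson-type cofiniteness criterion in the style of \cite{VKhSh1} and \cite{BNS}, which needs only finiteness of the first three Ext groups plus the $\operatorname{Min}$-hypothesis to control higher-degree behaviour at dimension zero, then completes the argument.

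The hardest part, I expect, will be the spectral-sequence bookkeeping: tracking differentials and edge maps carefully so that the $\mathfrak{a}$-cofiniteness of lower local cohomology really does propagate into finiteness (and not merely $\operatorname{FD}_{<n}$-type membership) of $\operatorname{Ext}^i_R(R/\mathfrak{a}, \operatorname{H}^{t-1}_\mathfrak{a}(M, X))$ in degrees $i \leq 2$. The self-referential $\operatorname{Min}$-hypothesis is engineered precisely to bridge the gap at the degrees $i > 2$, where the direct spectral-sequence computation no longer supplies enough information and one has to exploit $\dim(R/\mathfrak{a}) \leq 2$ together with the dimension-zero finiteness of $\operatorname{Ext}^i_R(R/\mathfrak{a}, \operatorname{H}^j_\mathfrak{a}(M, X))/Y_{ij}$ modulo finite submodules.
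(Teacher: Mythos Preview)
Your proposal is correct and follows essentially the same strategy as the paper: induction on $t$ for (i)$\Rightarrow$(ii), reducing to cofiniteness of $\operatorname{H}^{t-1}_\mathfrak{a}(M,X)$, obtaining finiteness of $\operatorname{Ext}^i_R(R/\mathfrak{a},\operatorname{H}^{t-1}_\mathfrak{a}(M,X))$ for $i\le 2$, and then applying the cofiniteness criterion \cite[Theorem 2.11]{VKhSh1}. The only cosmetic difference is that where you unpack the spectral sequence $E_2^{p,q}=\operatorname{Ext}^p_R(R/\mathfrak{a},\operatorname{H}^q_\mathfrak{a}(M,X))\Rightarrow\operatorname{Ext}^{p+q}_R(M/\mathfrak{a}M,X)$ directly, the paper cites the packaged consequences \cite[Corollary 2.2, Theorems 2.3, 2.7, 2.9]{VHH}, and it handles (ii)$\Rightarrow$(i) in one stroke via \cite{VHH} rather than folding it into the induction.
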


\begin{proof}
(i)$\Rightarrow$(ii). We prove by using induction on $t$. There is nothing to prove in the case that $t= 0$. Suppose that $t> 0$ and that $t- 1$ is settled. From the induction hypothesis, it is enough to show that $\operatorname{H}^{t- 1}_{\mathfrak{a}}(M, X)$ is an $\mathfrak{a}$-cofinite $R$-module. Since $\operatorname{H}^j_{\mathfrak{a}}(M, X)$ is an $\mathfrak{a}$-cofinite $R$-module for all $j< t- 1$ by the induction hypothesis, $\operatorname{Ext}^{1}_{R}(R/\mathfrak{a}, \operatorname{H}^{t- 1}_{\mathfrak{a}}(M, X))$ and $\operatorname{Ext}^{2}_{R}(R/\mathfrak{a}, \operatorname{H}^{t- 1}_{\mathfrak{a}}(M, X))$ are finite $R$-modules from our assumption and \cite[Corollary 2.2, Theorem 2.7, and Theorem 2.9]{VHH}. Therefore $\operatorname{H}^{t- 1}_{\mathfrak{a}}(M, X)$ is an $\mathfrak{a}$-cofinite $R$-module by \cite[Theorem 2.11]{VKhSh1}.

(ii)$\Rightarrow$(i). Follows from \cite[Corollary 2.2 and Theorem 2.3]{VHH}.
\end{proof}


\begin{thm}\label{2-18}
Let $\mathfrak{a}$ be an ideal of $R$ with $\dim(R/\mathfrak{a})\leq 2$, let $M$ be a finite $R$-module, and let $X$ be an arbitrary $R$-module such that $\operatorname{Ext}^{i}_{R}(M/\mathfrak{a}M, X)$ is a finite $R$-module for all $i$ and $\operatorname{Min}_R(\operatorname{Ext}^{i}_{R}(R/\mathfrak{a}, \operatorname{H}^j_{\mathfrak{a}}(M, X))/Y_{ij})_{= 0}$ is a finite set for all $i> 2$, for all $j$, and for all finite $R$-submodules $Y_{ij}$ of $\operatorname{Ext}^{i}_{R}(R/\mathfrak{a}, \operatorname{H}^j_{\mathfrak{a}}(M, X))$ $($e.g., $\operatorname{Supp}_R(M)\cap \operatorname{Supp}_R(X)\cap \operatorname{Var}(\mathfrak{a})\cap\operatorname{Max}(R)$ is a finite set$)$. Then the following statements hold true:
\begin{itemize}
\item[\emph{(i)}] $\operatorname{H}^j_{\mathfrak{a}}(M, X)$ is an $\mathfrak{a}$-cofinite $R$-module for all $j$ whenever $\operatorname{H}^{2j}_{\mathfrak{a}}(M, X)$ is an $\mathfrak{a}$-cofinite $R$-module for all $j$;
\item[\emph{(ii)}] $\operatorname{H}^j_{\mathfrak{a}}(M, X)$ is an $\mathfrak{a}$-cofinite $R$-module for all $j$ when $\operatorname{H}^{2j+ 1}_{\mathfrak{a}}(M, X)$ is an $\mathfrak{a}$-cofinite $R$-module for all $j$.
\end{itemize}
\end{thm}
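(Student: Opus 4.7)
The plan is to adapt the one-step induction used in Theorem~\ref{2-17} into a parity-respecting induction. For part~(i), I will prove by induction on $k\geq 0$ that $\operatorname{H}^{2k+1}_{\mathfrak{a}}(M, X)$ is $\mathfrak{a}$-cofinite; combined with the hypothesis that every even-indexed generalized local cohomology module is $\mathfrak{a}$-cofinite, this yields the conclusion for all $j$. Part~(ii) proceeds symmetrically by induction on $k\geq 0$ over $\operatorname{H}^{2k}_{\mathfrak{a}}(M, X)$, using the odd-index hypothesis as the complementary input.

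At the inductive step of (i) at level $k$, the first move is to combine the parity hypothesis with the inductive assumption to obtain that $\operatorname{H}^{j}_{\mathfrak{a}}(M, X)$ is $\mathfrak{a}$-cofinite for every $j< 2k+1$. From here the argument imitates the proof of Theorem~\ref{2-17}: by \cite[Corollary 2.2, Theorem 2.7, and Theorem 2.9]{VHH}, together with the hypothesis that $\operatorname{Ext}^{i}_{R}(M/\mathfrak{a}M, X)$ is finite for every $i$, I conclude that $\operatorname{Ext}^{1}_{R}(R/\mathfrak{a}, \operatorname{H}^{2k+1}_{\mathfrak{a}}(M, X))$ and $\operatorname{Ext}^{2}_{R}(R/\mathfrak{a}, \operatorname{H}^{2k+1}_{\mathfrak{a}}(M, X))$ are finite. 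Then \cite[Theorem 2.11]{VKhSh1}---whose $\operatorname{Min}$-set hypothesis is precisely the technical condition built into the present statement---delivers the $\mathfrak{a}$-cofiniteness of $\operatorname{H}^{2k+1}_{\mathfrak{a}}(M, X)$. The inductive step for (ii) is the same argument with the roles of even and odd indices interchanged.

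The one point requiring care is the base case $k= 0$ of part~(ii), where the target module is $\operatorname{H}^{0}_{\mathfrak{a}}(M, X)$ and no lower cohomology is available to feed into the VHH propagation. I would handle it by applying Theorem~\ref{2-17} with $t= 1$: the isomorphism $\operatorname{Hom}_R(R/\mathfrak{a}, \operatorname{H}^{0}_{\mathfrak{a}}(M, X))\cong \operatorname{Hom}_R(M/\mathfrak{a}M, X)$ combined with the finiteness hypothesis on $\operatorname{Ext}^{0}_{R}(M/\mathfrak{a}M, X)$ gives finiteness at $j= 0$, and the hypothesis that $\operatorname{H}^{1}_{\mathfrak{a}}(M, X)$ is $\mathfrak{a}$-cofinite gives it at $j= 1$, so Theorem~\ref{2-17} supplies cofiniteness of $\operatorname{H}^{0}_{\mathfrak{a}}(M, X)$ and starts the induction.

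The main obstacle I anticipate is simply the careful bookkeeping needed to verify that, at every stage of each induction, the union of the parity hypothesis and the inductive hypothesis really supplies the full ``$\operatorname{H}^{j}_{\mathfrak{a}}(M, X)$ cofinite for all $j< s$'' input demanded by the VHH propagation results and by \cite[Theorem 2.11]{VKhSh1}. Once this is in place, both parts reduce cleanly to the machinery already assembled in Theorem~\ref{2-17}.
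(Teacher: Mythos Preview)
Your approach is essentially the paper's: both prove cofiniteness of the ``missing'' parity by induction, feeding the VHH propagation lemmas and then \cite[Theorem~2.11]{VKhSh1}. There is, however, one omission. At the step for $\operatorname{H}^{2k+1}_{\mathfrak{a}}(M,X)$ in~(i), knowing only that $\operatorname{H}^j_{\mathfrak{a}}(M,X)$ is cofinite for every $j<2k+1$ is not enough to ``imitate Theorem~\ref{2-17}'': in that proof the finiteness of $\operatorname{Hom}_R(R/\mathfrak{a},\operatorname{H}^{t}_{\mathfrak{a}}(M,X))$---information one step \emph{above} the target $\operatorname{H}^{t-1}$---was an explicit hypothesis and is precisely what forces the finiteness of $\operatorname{Ext}^2$. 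You must therefore also invoke the parity hypothesis at level $2k+2$: since $\operatorname{H}^{2k+2}_{\mathfrak{a}}(M,X)$ is $\mathfrak{a}$-cofinite, combining this with the cofiniteness below and \cite[Corollary~2.2 and Theorems~2.3, 2.7, 2.9]{VHH} (Theorem~2.3 is needed as well, to obtain $\operatorname{Ext}^0$) yields that $\operatorname{Ext}^i_R(R/\mathfrak{a},\operatorname{H}^{2k+1}_{\mathfrak{a}}(M,X))$ is finite for all $i\leq 2$, after which \cite[Theorem~2.11]{VKhSh1} applies. The paper makes exactly this move. The same correction applies symmetrically in~(ii), and once it is in place your base case $k=0$ there is simply the instance of the inductive step with vacuous ``below'' information and $\operatorname{H}^{1}_{\mathfrak{a}}(M,X)$ supplying the ``above'' input, so the separate appeal to Theorem~\ref{2-17} becomes unnecessary.
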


\begin{proof}
(i). We prove that $\operatorname{H}^k_{\mathfrak{a}}(M, X)$ is an $\mathfrak{a}$-cofinite $R$-module for all $k\leq j$ by using induction on $j$. There is nothing to prove in the case that $j= 0$. Suppose that $j> 0$, $j- 1$ is settled, and $j$ is an odd integer. Thus $\operatorname{H}^k_{\mathfrak{a}}(M, X)$ is an $\mathfrak{a}$-cofinite $R$-module for all $k\leq j- 1$ from the induction hypothesis and $\operatorname{H}^{j+ 1}_{\mathfrak{a}}(M, X)$ is an $\mathfrak{a}$-cofinite $R$-module by our assumption. Hence, from \cite[Corollary 2.2 and Theorems 2.3, 2.7, and 2.9]{VHH}, $\operatorname{Ext}^{i}_{R}(R/\mathfrak{a}, \operatorname{H}^{j}_{\mathfrak{a}}(M, X))$ is a finite $R$-module for all $i\leq 2$. Therefore $\operatorname{H}^j_{\mathfrak{a}}(M, X)$ is an $\mathfrak{a}$-cofinite $R$-module by \cite[Theorem 2.11]{VKhSh1}.

(ii). The proof is similar to that of the first part and left to the reader.
\end{proof}


By taking $M=R$ in the above theorems, we have the following corollaries.

\begin{cor}\label{2-19}
Let $\mathfrak{a}$ be an ideal of $R$ with $\dim(R/\mathfrak{a})\leq 2$, let $X$ be an arbitrary $R$-module, and let $t$ be a non-negative integer such that $\operatorname{Ext}^{i}_{R}(R/\mathfrak{a}, X)$ is a finite $R$-module for all $i\leq t+ 1$ and $\operatorname{Min}_R(\operatorname{Ext}^{i}_{R}(R/\mathfrak{a}, \operatorname{H}^j_{\mathfrak{a}}(X))/Y_{ij})_{= 0}$ is a finite set for all $i> 2$, for all $j< t$, and for all finite $R$-submodules $Y_{ij}$ of $\operatorname{Ext}^{i}_{R}(R/\mathfrak{a}, \operatorname{H}^j_{\mathfrak{a}}(X))$ $($e.g., $\operatorname{Supp}_R(X)\cap \operatorname{Var}(\mathfrak{a})\cap\operatorname{Max}(R)$ is a finite set$)$. Then the following statements are equivalent:
\begin{itemize}
\item[\emph{(i)}] $\operatorname{Hom}_{R}(R/\mathfrak{a}, \operatorname{H}^j_{\mathfrak{a}}(X))$ is a finite $R$-module for all $j\leq t$;
\item[\emph{(ii)}] $\operatorname{H}^j_{\mathfrak{a}}(X)$ is an $\mathfrak{a}$-cofinite $R$-module for all $j< t$.
\end{itemize}
\end{cor}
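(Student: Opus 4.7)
The plan is to deduce this as the direct specialization of Theorem \ref{2-17} in the case $M = R$. Since $R$ is a finite $R$-module, $\operatorname{H}^{j}_{\mathfrak{a}}(R, X) \cong \operatorname{H}^{j}_{\mathfrak{a}}(X)$, and $R/\mathfrak{a}R = R/\mathfrak{a}$, every hypothesis and each conclusion of Theorem \ref{2-17} transforms into its counterpart in the present statement once we substitute $M = R$.

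First I would verify that the input hypotheses line up. The condition that $\operatorname{Ext}^{i}_{R}(R/\mathfrak{a}, X)$ is a finite $R$-module for all $i\leq t+1$ is literally the $M = R$ case of the corresponding assumption on $\operatorname{Ext}^{i}_{R}(M/\mathfrak{a}M, X)$. Likewise, the $\operatorname{Min}$-condition on $\operatorname{Ext}^{i}_{R}(R/\mathfrak{a}, \operatorname{H}^j_{\mathfrak{a}}(X))$ is exactly the $M = R$ instance of the $\operatorname{Min}$-condition in Theorem \ref{2-17}. For the parenthetical sufficient condition, note that $\operatorname{Supp}_R(R) = \operatorname{Spec}(R)$, so
\[\operatorname{Supp}_R(R) \cap \operatorname{Supp}_R(X) \cap \operatorname{Var}(\mathfrak{a}) \cap \operatorname{Max}(R) = \operatorname{Supp}_R(X) \cap \operatorname{Var}(\mathfrak{a}) \cap \operatorname{Max}(R),\]
so the two ``e.g.'' clauses agree.

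Applying Theorem \ref{2-17} with $M = R$ then yields the equivalence of (i) and (ii) at once. I do not expect any real obstacle: the argument is a routine specialization, and the only thing worth double-checking is that each assumption is genuinely unchanged under $M = R$, as indicated above. If a stand-alone proof were preferred, one would mimic the inductive argument of Theorem \ref{2-17} on $t$: the base case $t = 0$ is vacuous; for the inductive step, apply the induction hypothesis to obtain $\mathfrak{a}$-cofiniteness of $\operatorname{H}^j_{\mathfrak{a}}(X)$ for $j < t-1$, invoke \cite[Corollary 2.2, Theorem 2.7, and Theorem 2.9]{VHH} together with the hypothesis on $\operatorname{Hom}_R(R/\mathfrak{a}, -)$ to conclude that $\operatorname{Ext}^{1}_{R}(R/\mathfrak{a}, \operatorname{H}^{t-1}_{\mathfrak{a}}(X))$ and $\operatorname{Ext}^{2}_{R}(R/\mathfrak{a}, \operatorname{H}^{t-1}_{\mathfrak{a}}(X))$ are finite, and then close the induction via \cite[Theorem 2.11]{VKhSh1}. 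The reverse implication (ii)$\Rightarrow$(i) is immediate from \cite[Corollary 2.2 and Theorem 2.3]{VHH}.
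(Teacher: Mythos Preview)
Your proposal is correct and follows exactly the paper's approach: the corollary is obtained by taking $M=R$ in Theorem~\ref{2-17}. Your verification that the hypotheses (including the parenthetical ``e.g.'' clause via $\operatorname{Supp}_R(R)=\operatorname{Spec}(R)$) and conclusions specialize correctly is accurate, and the optional stand-alone inductive sketch you give simply reproduces the proof of Theorem~\ref{2-17} in this special case.
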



\begin{cor}\label{2-20}
Let $\mathfrak{a}$ be an ideal of $R$ with $\dim(R/\mathfrak{a})\leq 2$ and let $X$ be an arbitrary $R$-module such that $\operatorname{Ext}^{i}_{R}(R/\mathfrak{a}, X)$ is a finite $R$-module for all $i$ and $\operatorname{Min}_R(\operatorname{Ext}^{i}_{R}(R/\mathfrak{a}, \operatorname{H}^j_{\mathfrak{a}}(X))/Y_{ij})_{= 0}$ is a finite set for all $i> 2$, for all $j$, and for all finite $R$-submodules $Y_{ij}$ of $\operatorname{Ext}^{i}_{R}(R/\mathfrak{a}, \operatorname{H}^j_{\mathfrak{a}}(X))$ $($e.g., $\operatorname{Supp}_R(X)\cap \operatorname{Var}(\mathfrak{a})\cap\operatorname{Max}(R)$ is a finite set$)$. Then the following statements hold true:
\begin{itemize}
\item[\emph{(i)}] $\operatorname{H}^j_{\mathfrak{a}}(X)$ is an $\mathfrak{a}$-cofinite $R$-module for all $j$ whenever $\operatorname{H}^{2j}_{\mathfrak{a}}(X)$ is an $\mathfrak{a}$-cofinite $R$-module for all $j$;
\item[\emph{(ii)}] $\operatorname{H}^j_{\mathfrak{a}}(X)$ is an $\mathfrak{a}$-cofinite $R$-module for all $j$ when $\operatorname{H}^{2j+ 1}_{\mathfrak{a}}(X)$ is an $\mathfrak{a}$-cofinite $R$-module for all $j$.
\end{itemize}
\end{cor}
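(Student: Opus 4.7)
The plan is to deduce Corollary 2.20 as an immediate specialization of Theorem 2.18 with $M = R$, exactly in the spirit of how Corollary 2.19 is obtained from Theorem 2.17. Since the statement is explicitly flagged in the preceding paragraph as one of the results ``by taking $M = R$ in the above theorems,'' no new argument should be needed; what I have to do is verify that each hypothesis and each conclusion of Theorem 2.18 translates correctly under this substitution.

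First, I would recall from the introduction that $\operatorname{H}^i_{\mathfrak{a}}(R, X) \cong \operatorname{H}^i_{\mathfrak{a}}(X)$, so every occurrence of $\operatorname{H}^j_{\mathfrak{a}}(M, X)$ in Theorem 2.18 becomes $\operatorname{H}^j_{\mathfrak{a}}(X)$ upon setting $M = R$. Second, $R/\mathfrak{a}R = R/\mathfrak{a}$, and $R$ is obviously a finite $R$-module, so the hypothesis that $\operatorname{Ext}^{i}_{R}(M/\mathfrak{a}M, X)$ is finite for all $i$ becomes the hypothesis that $\operatorname{Ext}^{i}_{R}(R/\mathfrak{a}, X)$ is finite for all $i$. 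The technical assumption on the minimal primes of $\operatorname{Ext}^{i}_{R}(R/\mathfrak{a}, \operatorname{H}^j_{\mathfrak{a}}(M, X))/Y_{ij}$ translates verbatim, and the parenthetical sufficient condition collapses as $\operatorname{Supp}_R(R) = \operatorname{Spec}(R)$, yielding the stated criterion $\operatorname{Supp}_R(X) \cap \operatorname{Var}(\mathfrak{a}) \cap \operatorname{Max}(R)$ finite.

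With these identifications in place, parts (i) and (ii) of Theorem 2.18 specialize directly to parts (i) and (ii) of Corollary 2.20. There is essentially no obstacle: the entire proof will be the single line ``Take $M = R$ in Theorem \ref{2-18},'' and the only thing a careful reader has to check is precisely the routine substitution above. If I wanted to be more explicit, I could also mention that the hypothesis $\dim(R/\mathfrak{a}) \leq 2$ is carried over unchanged, since it involves only $R$ and $\mathfrak{a}$ and not $M$.
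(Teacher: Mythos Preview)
Your proposal is correct and matches the paper's approach exactly: the paper states immediately before Corollaries~\ref{2-19} and~\ref{2-20} that they are obtained ``by taking $M=R$ in the above theorems,'' and offers no further proof. Your verification that each hypothesis and conclusion of Theorem~\ref{2-18} specializes correctly under $M=R$ is precisely the routine check underlying that one-line deduction.
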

\bibliographystyle{amsplain}

\vspace*{5mm}
\author{
{\bf Alireza Vahidi, Ahmad Khaksari, and Mohammad Shirazipour}\\
\small Department of Mathematics, Payame Noor University, Tehran, Iran\\
\small E-mail address: vahidi.ar@pnu.ac.ir, a\underline{ }khaksari@pnu.ac.ir, and m\underline{ }shirazipour@pnu.ac.ir
}
\end{document}